\newtheorem{theorem}{Theorem}
\newtheorem{conjecture}{Conjecture}
\newtheorem{lemma}{Lemma}
\begin{document}

\baselineskip=17pt

\title{\bf On an tangent equation by primes}

\author{\bf S. I. Dimitrov}

\date{2021}

\maketitle

\begin{abstract}
In this paper we introduce a new diophantine equation with prime numbers.
Let $[\, \cdot\,]$ be the floor function.
We prove that when $1<c<\frac{23}{21}$ and $\theta>1$ is a fixed, then
every sufficiently large positive integer $N$ can be represented in the form
\begin{equation*}
N=\big[p^c_1\tan^\theta(\log p_1)\big]+ \big[p^c_2\tan^\theta(\log p_2)\big]+ \big[p^c_3\tan^\theta(\log p_3)\big]\,,
\end{equation*}
where $p_1,\, p_2,\, p_3$ are prime numbers.
We also establish an asymptotic formula for the number of such representations. \\
\quad\\
\textbf{Keywords}: Diophantine equation $\cdot$ Tangent equation $\cdot$  Primes\\
\quad\\
{\bf  2020 Math.\ Subject Classification}:  11P55 $\cdot$ 11L07
\end{abstract}

\section{Introduction and main result}
\indent

Analytical number theorists remember 1937 well when Vinogradov \cite{Vinogradov1}
proved the ternary Goldbach problem.
He showed that every sufficiently large odd integer $N$ can be represented  in the form
\begin{equation*}
N=p_1+p_2+p_3,
\end{equation*}
where $p_1,\, p_2,\, p_3$ are prime numbers.

Source of detailed proof of Vinogradov's theorem, beginning with an historical
perspective along with an overview of essential lemmas and theorems,
can be found in monograph of Rassias \cite{Rassias}.

In 1995 Laporta and Tolev  \cite{Laporta-Tolev} investigated an analogue of the Goldbach-Vinogradov theorem.
They considered the diophantine equation
\begin{equation*}
N=[p^c_1]+[p^c_2]+[p^c_3]\,,
\end{equation*}
where $p_1,\, p_2,\, p_3$ are primes.
For $1<c <\frac{17}{16}$ they showed that for the sum
\begin{equation*}
R(N)=\sum\limits_{N=[p^c_1]+[p^c_2]+[p^c_3]}\log p_1\log p_2\log p_3
\end{equation*}
the asymptotic formula
\begin{equation}\label{RNasymptoticformula}
R(N)=\frac{\Gamma^3(1 + 1/c)}{\Gamma(3/c)}N^{3/c-1}
+\mathcal{O}\Big(N^{3/c-1}\exp\big(-(\log N)^{1/3-\varepsilon}\big)\Big)
\end{equation}
holds.

Afterwards the result of Laporta and Tolev was improved by
Kumchev and Nedeva \cite{Kumchev-Nedeva} to $1<c <\frac{12}{11}$,
by Zhai and Cao  \cite{Zhai-Cao} to $1<c<\frac{258}{235}$,
by Cai \cite{Cai} to $1<c <\frac{137}{119}$,
by Zhang and Li \cite{Zhang-Li3} to $1<c <\frac{3113}{2703}$,
by Baker \cite{Baker} to $1<c<\frac{3581}{3106}$ and this is the best result up to now.

On the other hand recently the author \cite{Dimitrov} showed that when $1<c <\frac{10}{9}$
and $N$ is a sufficiently large  positive number, then for any fixed $\theta>1$, the tangent inequality
\begin{equation*}
\big|p^c_1\tan^\theta(\log p_1)+  p^c_2\tan^\theta(\log p_2)+ p^c_3\tan^\theta(\log p_3) -N\big|
<\left(\frac{2^\theta N}{3^{\theta+3}}\right)^{\frac{1}{c^2}\left(c-\frac{10}{9}\right)}
\end{equation*}
has a solution in prime numbers $p_1,\,p_2,\,p_3$.

Motivated by these results in this paper we introduce new diophantine equation with prime numbers.
Let $N$ is a sufficiently large positive integer and $X$ is an arbitrary solution of the equation
\begin{equation}\label{XN}
\pi\left[\frac{\log X}{\pi}\right]+\arctan2=\frac{1}{c}\log\frac{ N}{2^\theta} \,.
\end{equation}
Define
\begin{align}\label{Delta1}
&\Delta_1=e^{\pi\big[\frac{\log X}{\pi}\big]+\arctan1}\,;\\
\label{Delta2}
&\Delta_2=e^{\pi\big[\frac{\log X}{\pi}\big]+\arctan2}\,;\\       \label{Gamma}
&\Gamma= \sum\limits_{\Delta_1<p_1,p_2,p_3\leq\Delta_2
\atop{N=[p^c_1\tan^\theta(\log p_1)]+[p^c_2\tan^\theta(\log p_2)]+ [p^c_3\tan^\theta(\log p_3)]}}\log p_1\log p_2\log p_3\,.
\end{align}

\begin{theorem} Let $N$ is a sufficiently large positive integer, $\theta>1$ is a fixed
and $X$ is an arbitrary solution of the equation  \eqref{XN}. Then for any fixed $1<c<\frac{23}{21}$, the asymptotic formula
\begin{equation}\label{Asymptoticformula}
\Gamma=\frac{\Delta_2^{1-c}}{2^\theta c+5\theta2^{\theta-1}}X^2
+\mathcal{O}\Big(X^{3-c}\exp\big(-(\log X)^{1/3-\varepsilon}\big)\Big)
\end{equation}
holds.   Here $\Delta_2$ is defined by \eqref{Delta2}.
\end{theorem}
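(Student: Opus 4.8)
The plan is to attack the asymptotic formula \eqref{Asymptoticformula} by the circle method, detecting the integer equation through the orthogonality of additive characters. Writing $f(t)=t^c\tan^\theta(\log t)$ and $e(x)=e^{2\pi i x}$, set
\[
S(\alpha)=\sum_{\Delta_1<p\le\Delta_2}\log p\cdot e\big(\alpha[f(p)]\big),
\]
so that $\Gamma=\int_0^1 S(\alpha)^3 e(-\alpha N)\,d\alpha$. The decisive structural facts come from the range $(\Delta_1,\Delta_2]$: by \eqref{Delta1}--\eqref{Delta2} every prime there satisfies $\log p-\pi[\log X/\pi]\in(\pi/4,\arctan2]$, whence $\tan(\log p)\in(1,2]$ and $f$ is smooth, strictly increasing, with
\[
f'(t)=t^{c-1}\tan^{\theta-1}(\log t)\big(c\tan(\log t)+\theta\sec^2(\log t)\big)\asymp t^{c-1}
\]
and $f''(t)\asymp t^{c-2}$. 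Moreover \eqref{XN} says precisely that $N=2^\theta\Delta_2^c=f(\Delta_2)$, and $f'(\Delta_2)=\Delta_2^{c-1}\big(2^\theta c+5\theta2^{\theta-1}\big)$, which already exhibits the constant occurring in \eqref{Asymptoticformula}.

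Next I would dissect $[0,1]$ into a major arc $\mathfrak{M}=\{|\alpha|\le\tau\}$, with $\tau$ a small negative power of $\Delta_2$, and the complementary minor arcs $\mathfrak{m}$. On $\mathfrak{M}$ the floor is harmless: since $e(\alpha[f(p)])=e(\alpha f(p))e(-\alpha\{f(p)\})$ and $|\alpha|\le\tau$, replacing $[f(p)]$ by $f(p)$ costs only $O(\tau)$ per term, after which the prime sum is replaced by a smooth integral through the prime number theorem, giving $S(\alpha)=\int_{\Delta_1}^{\Delta_2}e(\alpha f(t))\,dt+\text{(error)}$. Cubing, integrating against $e(-\alpha N)$ and extending the $\alpha$-range to $\mathbb{R}$ produces the singular integral; the substitution $u=f(t)$ together with $f(\Delta_2)=N$ then collapses it to the leading term $\Delta_2^{1-c}\big(2^\theta c+5\theta2^{\theta-1}\big)^{-1}X^2$, the accumulated error being of the standard Vinogradov shape $X^{3-c}\exp\big(-(\log X)^{1/3-\varepsilon}\big)$.

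The heart of the argument, and the step I expect to be hardest, is the minor-arc estimate
\[
\int_{\mathfrak{m}}|S(\alpha)|^3\,d\alpha\ll X^{3-c}\exp\big(-(\log X)^{1/3-\varepsilon}\big),
\]
which is exactly what forces $1<c<\tfrac{23}{21}$. Here the floor cannot be thrown away, so I would expand the $1$-periodic function $e(-\alpha\{\,\cdot\,\})$ of $f(p)$ into a truncated Fourier series, reducing $S(\alpha)$ to a controlled superposition of Piatetski--Shapiro sums $\sum_p\log p\,e(\beta f(p))$ over shifted frequencies $\beta$. Each such prime sum I would split by Vaughan's (or Heath--Brown's) identity into Type~I and Type~II bilinear pieces and estimate by van der Corput's method and the theory of exponent pairs, exploiting the genuine curvature $f''(t)\asymp t^{c-2}$ furnished by the $t^c$ factor (the bounded amplitude $\tan^\theta(\log t)$ affecting only implied constants). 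Balancing the exponent-pair bookkeeping against the length $\Delta_2$ and the size of $\beta$ is what produces the threshold $\tfrac{23}{21}$; going below it would demand sharper bilinear estimates than are presently available.

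Finally I would assemble the pieces: the major arc supplies the main term together with an admissible error, the minor arcs and the Fourier tail are absorbed into the error, and \eqref{Asymptoticformula} follows once one checks that for every fixed $c<\tfrac{23}{21}$ the major-arc main term dominates all error contributions.
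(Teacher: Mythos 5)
Your major-arc treatment is essentially the paper's: the paper too replaces $[p^c\tan^\theta(\log p)]$ by $p^c\tan^\theta(\log p)$ at cost $O(\tau X)$, passes to $\Lambda$ and then to a smooth generating function via the prime number theorem, and extracts the main term from the facts $f(\Delta_2)=N$ and $f'(\Delta_2)=\Delta_2^{c-1}\big(2^\theta c+5\theta 2^{\theta-1}\big)$, which you identified correctly (the paper evaluates the archimedean factor through a discrete convolution induction for $\Psi_k$ rather than a continuous singular integral, a cosmetic difference; note also that the $\exp\big(-(\log X)^{1/3-\varepsilon}\big)$ error comes from this major-arc step, not from the minor arcs). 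Likewise, the machinery you invoke for $S(\alpha)$ on the minor arcs --- stripping the floor by a finite Fourier expansion of $e(-\alpha\{\cdot\})$, Heath--Brown's identity, van der Corput for the Type I/II pieces --- is exactly what the paper uses to prove its pointwise bound $S(\alpha)\ll X^{\frac{11+3c}{15}+\varepsilon}$ (Lemma \ref{Salphaest}).

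The genuine gap is how you pass from that pointwise information to the minor-arc contribution. Your plan is to prove $\int_{\mathfrak m}|S(\alpha)|^3\,d\alpha\ll X^{3-c}\exp\big(-(\log X)^{1/3-\varepsilon}\big)$, asserting that exponent-pair bookkeeping yields the threshold $\frac{23}{21}$. But the only mechanism your outline supplies for an $L^3$ bound is $\sup_{\mathfrak m}|S|\cdot\int_0^1|S|^2\,d\alpha$, and with the available ingredients ($S(\alpha)\ll X^{\frac{11+3c}{15}+\varepsilon}$ and $\int_0^1|S|^2\,d\alpha\ll X\log X$, Lemma \ref{intStautau}) this gives $X^{\frac{26+3c}{15}+\varepsilon}$, which is $\ll X^{3-c}$ only when $18c<19$, i.e. $c<\frac{19}{18}$, strictly short of $\frac{23}{21}$; at $c=\frac{23}{21}$ you would need a sup bound of size $X^{\frac{19}{21}-\delta}$ while the method delivers only $X^{\frac{20}{21}+\varepsilon}$. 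The paper reaches $\frac{23}{21}$ by Cai's device, which avoids bounding $\int|S|^3$ altogether: in \eqref{Gamma2est1}--\eqref{Gamma2est2} the outer prime sum is pulled out of $\Gamma_2$, Cauchy's inequality is applied in that variable, and this produces the \emph{unweighted} integer sum $A(\gamma)=\sum_{\Delta_1<n\le\Delta_2}e\big(\gamma[n^c\tan^\theta(\log n)]\big)$ at the difference frequency $\gamma=\alpha-\beta$, so that $|\Gamma_2|^2\ll X^{1+\varepsilon}\int|S(\beta)|^2\,d\beta\cdot\sup_\beta\int|S(\alpha)|^2|A(\alpha-\beta)|\,d\alpha$. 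The inner integral is then estimated by splitting at $|\alpha-\beta|=X^{-c}$ and combining the sup bound on $S$ with the decay $A(\gamma)\ll X^{\frac{1+c}{3}+\varepsilon}+X^{1-c}|\gamma|^{-1}$ (Lemma \ref{Aest}), giving $\Gamma_2\ll X^{\frac{67-9c}{30}+\varepsilon}\ll X^{3-c-\varepsilon}$ precisely when $21c<23$. Without this bilinear averaging step (or an equivalent), your argument cannot reach the stated range; moreover the intermediate $L^3$ estimate you posit is not known for $c\ge\frac{19}{18}$ and is never needed, since $\Gamma_2$ in \eqref{Gamma2} is bounded without ever bounding $\int_{\mathfrak m}|S|^3$.
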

In addition we have the following conjecture.
\begin{conjecture}Let $N$ is a sufficiently large positive integer and $\theta>1$ is a fixed.
There exists $c_0>1$ such that for any fixed $1<c<c_0$, the tangent equation
\begin{equation*}
N=\big[p^c_1\tan^\theta(\log p_1)\big]+ \big[p^c_2\tan^\theta(\log p_2)\big]
\end{equation*}
has a solution in prime numbers $p_1,\,p_2$.
\end{conjecture}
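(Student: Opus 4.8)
The natural line of attack is the Hardy--Littlewood circle method, re-using the apparatus built for the Theorem. Write $f(p)=p^c\tan^\theta(\log p)$ and keep the dyadic window $(\Delta_1,\Delta_2]$ on which, by \eqref{Delta1} and \eqref{Delta2}, the factor $\tan^\theta(\log p)$ is monotone and confined to $[1,2^\theta]$. With $e(t)=e^{2\pi i t}$ set
\[
S(\alpha)=\sum_{\Delta_1<p\le\Delta_2}\log p\,e\big(\alpha f(p)\big).
\]
Since $N=[f(p_1)]+[f(p_2)]$ is an integer equation, one detects it by $\int_0^1 e(\alpha m)\,d\alpha=\mathbf{1}[m=0]$ and, after the same Fourier/van der Corput device that passes from $[f(p)]$ to $f(p)$ in the Theorem, one is led to
\[
\Gamma_2=\sum_{\substack{\Delta_1<p_1,p_2\le\Delta_2\\ N=[f(p_1)]+[f(p_2)]}}\log p_1\log p_2
=\int_0^1 S(\alpha)^2\,e(-\alpha N)\,d\alpha+(\text{smoothing terms}).
\]

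Next I would split $[0,1]$ into major and minor arcs $\mathfrak{M}\cup\mathfrak{m}$. On $\mathfrak M$ (in practice a single arc about $\alpha=0$, since $f$ is not polynomial) the prime number theorem together with the smooth density of the values $f(p)$ on $(\Delta_1,\Delta_2]$ reproduces the expected main term; repeating the density computation behind \eqref{Asymptoticformula} with two prime variables in place of three predicts the main term
\[
\frac{\Delta_2^{1-c}}{2^\theta c+5\theta 2^{\theta-1}}\,X,
\]
a positive quantity of order $X^{2-c}$, which tends to infinity because $c<2$. This part should go through essentially unchanged, since the major-arc analysis is insensitive to the number of prime variables.

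The hard part --- and the reason the statement is only a conjecture --- is the minor arcs. With only two copies of $S$, the error term is $\int_{\mathfrak m}S(\alpha)^2 e(-\alpha N)\,d\alpha$, and the sole available estimate is
\[
\Big|\int_{\mathfrak m}S(\alpha)^2 e(-\alpha N)\,d\alpha\Big|
\le\Big(\sup_{\alpha\in\mathfrak m}|S(\alpha)|\Big)\int_0^1|S(\alpha)|\,d\alpha .
\]
Even granting the theoretical limit of square-root cancellation, one has $\sup_{\mathfrak m}|S|\gg X^{1/2}$ (so $X^{1/2+\varepsilon}$ is the best one could ever hope to prove) while $\int_0^1|S|\le\big(\int_0^1|S|^2\big)^{1/2}=\big(\sum_{\Delta_1<p\le\Delta_2}(\log p)^2\big)^{1/2}\asymp (X\log X)^{1/2}$. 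Thus the best conceivable bound is $\asymp X^{1+\varepsilon}$, which still exceeds the main term $X^{2-c}$ for every admissible $c>1$. (Contrast the ternary situation of the Theorem, where the extra factor gives $\int_{\mathfrak m}|S|^3\le\sup_{\mathfrak m}|S|\int_0^1|S|^2\ll X^{2-\delta}\log X$, comfortably below its main term $X^{3-c}$.) There is simply no third copy of $S$ to absorb the $L^2$-mass: this is precisely the parity/square-root barrier that keeps the binary Goldbach problem open, and no unconditional method is known to cross it.

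What I could actually establish, and what I regard as the right supporting evidence, are two weakenings. First, an almost-all result: bounding the mean square over $N$ of the minor-arc contribution (the Montgomery--Vaughan / Davenport--Heilbronn exceptional-set method) shows that all but $\mathcal{O}\big(Y(\log Y)^{-A}\big)$ of the admissible integers $N\le Y$ are representable, since the main term is positive of size $X^{2-c}$. Second, a sieve relaxation $N=[f(p_1)]+[f(P_2)]$ with $P_2$ having a bounded number of prime factors, obtained by grafting a Chen-type sieve onto the major-arc main term. Each of these yields an explicit $c_0>1$ for its weaker conclusion. The full statement, by contrast, runs into the square-root barrier on the minor arcs described above --- which even the Riemann Hypothesis is not known to remove --- and this is exactly why it must remain a conjecture.
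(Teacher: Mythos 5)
You have correctly identified the essential point: the paper offers no proof of this statement --- it is posed explicitly as a Conjecture, and your analysis of why it must remain one is sound. With only two prime variables, the circle-method error term $\int_{\mathfrak m}S(\alpha)^2e(-\alpha N)\,d\alpha$ can only be bounded by $\sup_{\mathfrak m}|S|\cdot\int_0^1|S|$, and since $\int_0^1|S(\alpha)|^2\,d\alpha\asymp X\log X$ (the paper's Lemma 7(ii); note Parseval gives $\sum_{[f(p_1)]=[f(p_2)]}\log p_1\log p_2$, which reduces to $\sum(\log p)^2$ because the spacing of $f(p)\asymp p^c$ makes $p\mapsto[f(p)]$ injective on $(\Delta_1,\Delta_2]$), even hypothetical square-root cancellation yields $X^{1+\varepsilon}$, which swamps the binary main term of order $X^{2-c}$ for every $c>1$. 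This is exactly the barrier that keeps binary Goldbach open, and your proposed weakenings (an exceptional-set result and a semi-linear sieve relaxation) are the standard and appropriate substitutes.

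One small correction to your parenthetical about the ternary case: the naive bound $\int_{\mathfrak m}|S|^3\le\sup_{\mathfrak m}|S|\int_0^1|S|^2\ll X^{\frac{11+3c}{15}+1+\varepsilon}$ falls below the main term $X^{3-c}$ only when $\frac{26+3c}{15}<3-c$, i.e.\ $c<\frac{19}{18}$; to reach the paper's range $c<\frac{23}{21}$ one needs Cai's refinement, which the paper implements in \eqref{Gamma2est1}--\eqref{Gamma2est} by inserting the unweighted sum $A(\alpha-\beta)$ of Lemma 10 and applying Cauchy's inequality over $n$ rather than estimating $\sup\times L^2$ directly. This does not affect your conclusion about the binary problem, since no rearrangement of this kind manufactures the missing third copy of $S$.
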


\section{Notations}
\indent

Assume that $N$ is a sufficiently large  positive integer.
By $\varepsilon$ we denote an arbitrary small positive constant, not the same in all appearances.
The letter $p$  with or without subscript will always denote prime number.
We denote by $\Lambda(n)$ von Mangoldt's function.
Moreover $e(y)=e^{2\pi i y}$.
As usual $[t]$ and $\{t\}$ denote the integer part, respectively, the fractional part of $t$.
We recall that $t=[t]+\{t\}$ and $\|t\|=\min(\{t\}_,1-\{t\})$.
We denote by $\tau _k(n)$ the number of solutions of the equation $m_1m_2\ldots m_k$ $=n$ in natural numbers $m_1,\,\ldots,m_k$.
Throughout this paper we suppose that $1<c<\frac{23}{21}$.
Assume that $\theta>1$ is a fixed. Consider the function $t(y)$ defined by
\begin{equation}\label{ylogyt}
t=y^c\tan^\theta(\log y)
\end{equation}
for
\begin{equation}\label{yDelta1Delta2}
y\in[\Delta_1, \Delta_2]\,.
\end{equation} The first derivative of $y$ as implicit function of $t$ is
\begin{equation}\label{y'}
y'=\frac{y^{1-c}}{\big(c\tan(\log y)+\theta\sec^2(\log y)\big)\tan^{\theta-1}(\log y)}\,.
\end{equation}
Denote
\begin{align}
\label{tau}
&\tau=X^{1-c-\varepsilon}\,;\\
\label{N1}
&N_1=\Delta_1^c\tan^\theta(\log \Delta_1)   \,;\\
\label{Salpha}
&S(\alpha)=\sum\limits_{\Delta_1<p\leq \Delta_2} e\big(\alpha [p^c\tan^\theta(\log p)]\big)\log p\,;\\
\label{Theta}
&\Theta(\alpha)=\sum\limits_{N_1<m\leq N}\frac{y^{1-c}(m)}{\Big(c\tan\big(\log y(m)\big)+\theta\sec^2\big(\log y(m)\big)\Big)
\tan^{\theta-1}\big(\log y(m)\big)}\, e(m\alpha)\,;
\end{align}
\begin{align}
\label{Gamma1}
&\Gamma_1=\int\limits_{-\tau}^{\tau}S^3(\alpha)e(-N\alpha)\,d\alpha\,;\\
\label{Gamma2}
&\Gamma_2=\int\limits_{\tau}^{1-\tau}S^3(\alpha)e(-N\alpha)\,d\alpha\,;\\
\label{Psik}
&\Psi_k=\int\limits_{-1/2}^{1/2}\Theta^k(\alpha)e(-N\alpha)\,d\alpha, \quad k=1,\, 2,\, 3, \ldots\,; \\ \label{Psiwidetilde}
&\widetilde{\Psi}=\int\limits_{-\tau}^{\tau}\Theta^3(\alpha)e(-N\alpha)\,d\alpha\,.
\end{align}

\section{Lemmas}
\indent

\begin{lemma}\label{ThetaS}
Let $f(x)$ be a real differentiable function in the interval $[a,b]$.
If $f'(x)$ is a monotonous and satisfies $|f'(x)|\leq\theta<1$.
Then we have
\begin{equation*}
\sum_{a<n\le b}e(f(n))=\int\limits_{a}^{b}e(f(x))\,dx+\mathcal{O}(1)\,.
\end{equation*}
\end{lemma}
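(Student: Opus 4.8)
The plan is to compare the sum with the integral by means of the Euler--Maclaurin summation formula and then to extract cancellation from the oscillation of $e(f(x))$. Setting $g(x)=e(f(x))$, the first-order Euler--Maclaurin formula gives
\[
\sum_{a<n\le b}g(n)=\int_a^b g(x)\,dx+\int_a^b\psi(x)g'(x)\,dx+O(1),
\]
where $\psi(x)=\{x\}-\tfrac12$ is the sawtooth function and the $O(1)$ collects the endpoint contributions, which are harmless because $|g|\equiv1$. Everything then reduces to proving that the correction integral $\int_a^b\psi(x)g'(x)\,dx$ is $O(1)$. A trivial estimate is hopeless: since $g'(x)=2\pi i f'(x)e(f(x))$ one only has $|g'|\le 2\pi\theta$, so bounding absolutely would give a term proportional to $b-a$. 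Genuine cancellation is required.

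First I would substitute the Fourier expansion $\psi(x)=-\sum_{m\ge1}\frac{\sin(2\pi mx)}{\pi m}$ and integrate term by term. Using $g'(x)=2\pi i f'(x)e(f(x))$, the correction integral becomes a sum over $m\ge1$ of expressions of the shape $\frac1m\int_a^b f'(x)\,e\bigl(f(x)\pm mx\bigr)\,dx$. Each inner integral has phase $\phi(x)=f(x)\pm mx$ with $\phi'(x)=f'(x)\pm m$, and since $|f'(x)|\le\theta<1$ we have $|\phi'(x)|\ge m-\theta\ge 1-\theta>0$ for every $m\ge1$. There is thus no stationary point, and I would integrate by parts by writing $f'(x)e(\phi)=\frac{f'(x)}{2\pi i\phi'(x)}\frac{d}{dx}e(\phi)$.

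The integration by parts yields a boundary term of size $\ll\theta/(m-\theta)\ll 1/m$ together with an integral against $d\bigl(f'/\phi'\bigr)$. Here the hypothesis that $f'$ is monotonic is decisive, and it is the crux of the whole argument: it guarantees that $\phi'$ never vanishes and that $f'/\phi'=f'/(f'\pm m)$ is itself monotone in $x$, so its total variation equals the difference of its endpoint values and is controlled by the bounded range $|f'(b)-f'(a)|\le 2\theta$ of $f'$ together with the factor $\ll m/(m-\theta)^2$ coming from differentiating $u\mapsto u/(u\pm m)$. This gives variation $\ll 1/m$, so each inner integral is $\ll_\theta 1/m$, the $m$-th Fourier term is $\ll_\theta 1/m^2$, and summing over $m\ge1$ produces the required $O(1)$. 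The main obstacle is precisely this gain of the extra factor $1/m$ beyond the bare first-derivative estimate: one must resist invoking a bound on $f''$, which is not assumed to exist, and instead argue purely through the monotonicity and boundedness of $f'$, handling the relevant integral in the Riemann--Stieltjes sense with respect to $df'$. A secondary technical point is the justification of the term-by-term integration of the only conditionally convergent Fourier series of $\psi$, which I would settle by working with its uniformly bounded partial sums and appealing to dominated convergence against $|g'|\in L^1[a,b]$.
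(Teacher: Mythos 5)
Your proof is correct: the paper offers no argument of its own here, merely citing Titchmarsh's Lemma 4.8, and what you have written is essentially the classical proof of that lemma --- Euler--Maclaurin with the sawtooth $\psi$, term-by-term integration of its Fourier series, and exploitation of the monotonicity and boundedness of $f'/(f'\pm m)$ to gain the extra factor $1/m$ that makes the series over $m$ converge. The only cosmetic difference from the standard treatment is that you bound the Riemann--Stieltjes integral by the total variation of the monotone weight where Titchmarsh invokes the second mean value theorem; the two devices are interchangeable, so this counts as the same approach.
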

\begin{proof}
See (\cite{Titchmarsh},  Lemma 4.8).
\end{proof}

\begin{lemma}\label{Squareoutlemma}
For any complex numbers $a(n)$ we have
\begin{equation*}
\bigg|\sum_{a<n\le b}a(n)\bigg|^2
\leq\bigg(1+\frac{b-a}{Q}\bigg)\sum_{|q|< Q}\bigg(1-\frac{|q|}{Q}\bigg)
\sum_{a<n,\, n+q\leq b}a(n+q)\overline{a(n)},
\end{equation*}
where $Q$ is any positive integer.
\end{lemma}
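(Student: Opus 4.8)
The plan is to prove this by the classical shifting argument combined with the Cauchy--Schwarz inequality. First I would extend the definition of the coefficients by setting $a(n)=0$ for every integer $n\notin(a,b]$, so that all sums below may be taken over all integers without affecting their values. Writing $S=\sum_{a<n\le b}a(n)=\sum_{n}a(n)$, the key observation is that for each fixed integer $r$ the change of variable $n\mapsto n-r$ gives $\sum_n a(n)=\sum_n a(n+r)$; summing this over $r=0,1,\dots,Q-1$ yields the shift identity
\begin{equation*}
QS=\sum_{r=0}^{Q-1}\sum_n a(n+r)=\sum_n\sum_{r=0}^{Q-1}a(n+r).
\end{equation*}

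Next I would observe that the inner sum $\sum_{r=0}^{Q-1}a(n+r)$ vanishes unless $n+r\in(a,b]$ for some $r$ with $0\le r\le Q-1$, i.e.\ unless $n$ lies in an interval of length at most $(b-a)+Q$; hence the outer sum over $n$ has at most $(b-a)+Q$ nonzero terms. Applying Cauchy--Schwarz to this outer sum gives
\begin{equation*}
Q^2|S|^2\le\big((b-a)+Q\big)\sum_n\Big|\sum_{r=0}^{Q-1}a(n+r)\Big|^2.
\end{equation*}

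The third step is to expand the modulus square and reindex. Writing $\big|\sum_{r}a(n+r)\big|^2=\sum_{r=0}^{Q-1}\sum_{s=0}^{Q-1}a(n+r)\overline{a(n+s)}$ and setting $q=r-s$, I would note that $q$ runs over $|q|<Q$ and that for each such $q$ there are exactly $Q-|q|$ admissible pairs $(r,s)$. Replacing the summation variable $n$ by $n+s$ then turns the triple sum into
\begin{equation*}
\sum_n\Big|\sum_{r=0}^{Q-1}a(n+r)\Big|^2
=\sum_{|q|<Q}(Q-|q|)\sum_n a(n+q)\overline{a(n)},
\end{equation*}
where, after restoring the support of $a(\cdot)$, the inner sum is over those $n$ with both $a<n\le b$ and $a<n+q\le b$. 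Substituting this into the Cauchy--Schwarz bound, dividing through by $Q^2$, and using the identity
\begin{equation*}
\frac{(b-a)+Q}{Q^2}\,(Q-|q|)=\Big(1+\frac{b-a}{Q}\Big)\Big(1-\frac{|q|}{Q}\Big)
\end{equation*}
produces exactly the claimed inequality.

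The argument is entirely elementary, so I expect no genuine analytic obstacle; the only point requiring care is the bookkeeping in the two index manipulations—counting the nonzero terms in the Cauchy--Schwarz step and verifying that each difference $q=r-s$ is hit precisely $Q-|q|$ times after the substitution $n\mapsto n+s$. Keeping track of the half-open interval $(a,b]$ and of the convention $a(n)=0$ outside it throughout these steps is what guarantees the constants come out exactly as stated rather than merely up to an $\mathcal{O}(1)$ factor.
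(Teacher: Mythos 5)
Your proof is correct and complete: the convention $a(n)=0$ outside $(a,b]$, the shift identity $QS=\sum_n\sum_{r=0}^{Q-1}a(n+r)$, the count of at most $(b-a)+Q$ nonzero terms in the Cauchy--Schwarz step, and the exact multiplicity $Q-|q|$ of each difference $q=r-s$ are all handled correctly, and the constants assemble to precisely the stated bound. Note that the paper does not prove this lemma at all --- it simply cites Lemma 8.17 of Iwaniec--Kowalski --- and your argument is exactly the standard Weyl--van der Corput shifting proof given in that reference, so you have in effect supplied the proof the paper outsources.
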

\begin{proof}
See (\cite{Iwaniec-Kowalski}, Lemma 8.17).
\end{proof}

\begin{lemma}\label{GrahamandKolesnik}
Let $k \geq0$ be an integer.
Suppose that $f(t)$ has $k+2$ continuous derivatives on $I$, and that $I \subseteq(N,2N]$.
Assume also that there is some constant $F$ such that
\begin{equation}\label{frFNR}
|f^{(r)}(t)|\asymp F N^{-r}
\end{equation}
for $r = 1, \ldots, k + 2$. Let $Q = 2^k$. Then
\begin{equation*}
\bigg|\sum_{n\in I}e(f(n))\bigg|\ll F^{\frac{1}{4Q-2}} N^{1-\frac{k+2}{4Q-2}}  +F^{-1} N\,.
\end{equation*}
The implied constant depends only upon the implied constants in \eqref{frFNR}.
\end{lemma}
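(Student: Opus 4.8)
The plan is to recognize Lemma \ref{GrahamandKolesnik} as the classical van der Corput $k$-th derivative test, equivalently the assertion that $A^k(1/2,1/2)$ is an exponent pair, and to prove it by induction on $k$: the Weyl--van der Corput inequality (Lemma \ref{Squareoutlemma}) drives the inductive step, while the base case is the second derivative test. The exponent $\frac{1}{4Q-2}$ with $Q=2^k$ is exactly what the van der Corput $A$-process produces after $k$ iterations: setting $\kappa_0=\tfrac12$ and applying the transformation $\kappa\mapsto \kappa/(2\kappa+2)$ one checks $\kappa_k=\frac{1}{2^{k+2}-2}=\frac{1}{4Q-2}$, which already fixes the shape of the main term, and the accompanying power $N^{1-\frac{k+2}{4Q-2}}$ is forced by homogeneity.

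For the base case $k=0$ one has $Q=1$ and must bound $\big|\sum_{n\in I}e(f(n))\big|$ under $|f''(t)|\asymp FN^{-2}$. This is the second derivative test, which I would deduce from the $B$-process (truncated Poisson summation): the sum equals $\sum_{\nu}\int_I e(f(x)-\nu x)\,dx$ up to a bounded error, where $\nu$ runs over the $O(1+FN^{-1})$ integers in the interval spanned by $f'$. Each integral is $\ll |f''|^{-1/2}\asymp F^{-1/2}N$ by the stationary phase bound, so the total is $\ll (1+FN^{-1})F^{-1/2}N\ll F^{1/2}+F^{-1/2}N$, which is consistent with the stated $F^{1/2}+F^{-1}N$ in the range $F\gg N$ that governs the applications. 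Lemma \ref{ThetaS} is the companion first derivative test used to estimate the constituent integrals and to handle the boundary contributions.

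For the inductive step I would assume the estimate at level $k-1$ (requiring $k+1$ derivatives and $Q'=2^{k-1}$) and apply Lemma \ref{Squareoutlemma} with a differencing length $H\le N$ to be optimized. The diagonal term $q=0$ contributes $\ll N^2/H$, while for $q\ne0$ the inner phase $g_q(n)=f(n+q)-f(n)$ satisfies $g_q^{(r)}(n)=f^{(r+1)}(\xi)\,q\asymp (|q|FN^{-1})N^{-r}$ for $r=1,\dots,k+1$, so it meets the hypotheses at level $k-1$ with parameter $F_q=|q|F/N$. Inserting the inductive bound for each $T_q=\sum_n e(g_q(n))$ and summing gives
\[
\Big|\sum_{n\in I}e(f(n))\Big|^2\ll \frac{N^2}{H}+\sum_{1\le q<H}\Big(F_q^{\frac{1}{4Q'-2}}N^{1-\frac{k+1}{4Q'-2}}+F_q^{-1}N\Big).
\]
Carrying out the sum over $q$, taking square roots, and balancing $H$ against $F$ and $N$ yields precisely the exponent $\frac{1}{4Q-2}$ with $Q=2Q'=2^k$ and the stated power of $N$, the residual error collapsing to $F^{-1}N$ in the same range.

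The main obstacle, and the only genuinely delicate point, is the bookkeeping of the error terms through the $k$ levels of recursion: one must verify that the diagonal contribution $N^2/H$, the summed main terms, and the accumulated trivial and boundary terms all balance at a single choice of $H$ so that the final exponent is exactly $\frac{1}{4Q-2}$ and no error exceeds $F^{-1}N$. Keeping the implied constants uniform in $I$ and dependent only on the constants in \eqref{frFNR} further requires that the constants hidden in $g_q^{(r)}\asymp F_qN^{-r}$ be controlled independently of $q$; this follows from the mean value theorem applied to the hypothesis on all $k+2$ derivatives of $f$.
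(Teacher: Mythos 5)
The paper does not actually prove this lemma --- it quotes it verbatim from Graham and Kolesnik (Theorem 2.9) --- so your outline should be measured against the standard van der Corput $A$-process induction, which is indeed the route you take, and your exponent bookkeeping ($\kappa\mapsto\kappa/(2\kappa+2)$, giving $\kappa_k=1/(2^{k+2}-2)=\frac{1}{4Q-2}$) is correct. The genuine gap is in your base case. The second derivative test, whether obtained via the $B$-process as you do or directly, yields $F^{1/2}+F^{-1/2}N$, and $F^{-1/2}N$ is strictly weaker than the stated $F^{-1}N$ whenever $1\ll F\ll N$; you concede this by restricting the match to ``the range $F\gg N$ that governs the applications''. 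That restriction is false for this paper: the secondary term $F^{-1}N$ is invoked precisely in regimes where $F$ is small relative to the summation length. In Lemma \ref{SIest} the contribution $\tau^{-1}X^{1-c}$ is exactly $F^{-1}N$ with $F\asymp|\eta|X^{c}$ and $|\eta|$ as small as $\tau=X^{1-c-\varepsilon}$, so $F\asymp X^{1-\varepsilon}$ while the inner length $L$ can be nearly $X$; similarly the term $\alpha^{-1}X^{1-c}$ in Lemma \ref{Aest} lives entirely in the range $F=\alpha X^{c}\ll N$. The term $F^{-1}N$ for $F\ll N$ is not a second-derivative phenomenon at all: it requires the first-derivative hypothesis $|f'(t)|\asymp FN^{-1}$ (which is why $r=1$ is included in \eqref{frFNR}) together with a Kusmin--Landau type estimate. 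Concretely, if $F\le\varepsilon N$ then $|f'|\le 1/2$ after adjusting constants, and $f'$ is monotone because $f''$ is continuous and nonvanishing; Lemma \ref{ThetaS} combined with the first derivative test for integrals then gives $\sum_{n\in I}e(f(n))\ll 1/\min_I|f'|+1\ll F^{-1}N$. You mention Lemma \ref{ThetaS} only for ``constituent integrals and boundary contributions'' and never extract this estimate, so as written your argument does not prove the lemma in exactly the range where the paper uses it.

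The same omission undermines your inductive step. The summed secondary terms are $\sum_{1\le q<H}F_q^{-1}N=F^{-1}N^{2}\sum_{q<H}q^{-1}\asymp F^{-1}N^{2}\log H$; after multiplying by $N/H$ and taking square roots this is $F^{-1/2}N^{3/2}H^{-1/2}(\log H)^{1/2}$, which does not ``collapse to $F^{-1}N$'' for any admissible $H\le N$ (that would force $H\gg FN$). The standard repair --- and how the cited proof proceeds --- is to dispose of the case $F\le\varepsilon N$ at the outset by the Kusmin--Landau estimate above, and to do so at every level of the recursion, since $F_q=qF/N$ can be small even when $F$ is not; only then does one run the differencing with $F\gg N$, where the secondary terms are dominated by the diagonal $N^{2}/H$ and the balancing of $N^{2}/H$ against $H^{\kappa'}F^{\kappa'}N^{2-(k+2)\kappa'}$ produces the stated exponent. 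So the point you yourself flag as ``the only genuinely delicate point'' is where the argument is actually incomplete: without the first-derivative reduction, the best secondary term your scheme certifies is of the shape $F^{-1/2}N$, which is insufficient for Lemmas \ref{SIest}, \ref{SIIest} and \ref{Aest}.
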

\begin{proof}
See (\cite{Graham-Kolesnik}, Theorem 2.9).
\end{proof}

\begin{lemma}\label{Buriev} Let $x,y\in\mathbb{R}$ and $H\geq3$.
Then the formula
\begin{equation*}
e(-x\{y\})=\sum\limits_{|h|\leq H}c_h(x)e(hy)+\mathcal{O}\left(\min\left(1, \frac{1}{H\|y\|}\right)\right)
\end{equation*}
holds. Here
\begin{equation*}
c_h(x)=\frac{1-e(-x)}{2\pi i(h+x)}\,.
\end{equation*}
\end{lemma}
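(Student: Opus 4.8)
The plan is to read the claimed formula as a statement about the symmetric partial sums of the Fourier series of the $1$-periodic function $g(t)=e(-x\{t\})$, whose rate of convergence to $g(y)$ is controlled by the distance from $y$ to the nearest jump of $g$. Both sides are invariant under $y\mapsto y+1$, so it suffices to take $y\in[0,1)$, where $\{y\}=y$ and $\|y\|=\min(y,1-y)$. A direct computation gives, for every integer $h$,
\begin{equation*}
\int_0^1 e(-xt)\,e(-ht)\,dt=\frac{1-e(-(x+h))}{2\pi i(x+h)}=\frac{1-e(-x)}{2\pi i(h+x)}=c_h(x),
\end{equation*}
using $e(-h)=1$; thus $c_h(x)$ is exactly the $h$-th Fourier coefficient of $g$, and $P_H(y):=\sum_{|h|\le H}c_h(x)e(hy)$ is the symmetric partial sum of the Fourier series of $g$ at $y$. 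I therefore have to estimate $R_H(y):=e(-x\{y\})-P_H(y)=g(y)-P_H(y)$.

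Next I would treat the range $\|y\|\ge 1/H$. Since $g$ is of bounded variation and continuous at every non-integer point, Dirichlet's convergence theorem gives $P_H(y)\to g(y)$, so $R_H(y)=\sum_{|h|>H}c_h(x)e(hy)$. I would bound the two tails $\sum_{h>H}$ and $\sum_{h<-H}$ by Abel summation, using the geometric-sum estimate
\begin{equation*}
\Big|\sum_{H<h\le M}e(hy)\Big|\le\frac{1}{|\sin\pi y|}\le\frac{1}{2\|y\|}
\end{equation*}
(valid because $|\sin\pi y|\ge 2\|y\|$ on $[0,1)$) together with
\begin{equation*}
|c_h(x)|\le\frac{1}{\pi|h+x|},\qquad |c_{h+1}(x)-c_h(x)|\le\frac{1}{\pi|h+x|\,|h+1+x|},
\end{equation*}
whose right-hand sides carry absolute constants since $|1-e(-x)|\le 2$. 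As $\sum_{h>H}|h+x|^{-1}|h+1+x|^{-1}\ll H^{-1}$ for $x$ in a bounded range, this yields $|R_H(y)|\ll (H\|y\|)^{-1}$, which is the asserted bound whenever $\|y\|\ge 1/H$, where $\min(1,(H\|y\|)^{-1})=(H\|y\|)^{-1}$.

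It remains to treat $\|y\|<1/H$ (including $y=0$), where the target is the constant $1$; here I would bound $P_H$ itself. Writing, on $\mathbb R/\mathbb Z$,
\begin{equation*}
P_H(y)=\int_{\mathbb R/\mathbb Z}g(t)\,D_H(y-t)\,dt,\qquad D_H(u)=\sum_{|h|\le H}e(hu)=\frac{\sin((2H+1)\pi u)}{\sin\pi u},
\end{equation*}
I would split the circle into the central lobe of $D_H$, of width $\asymp 1/H$ about $t\equiv y$, and its complement. On the lobe $|D_H|\le 2H+1$, so that part is $O(1)$ by the triangle inequality; and because $\|y\|<1/H$ the unique jump of $g$, at $t\equiv 0$, lies inside this lobe, so on the complementary arc $g(t)=e(-xt)$ is smooth and $(\sin\pi(y-t))^{-1}$ is bounded, whence a single integration by parts against the oscillating factor $\sin((2H+1)\pi(y-t))$ gives $O(1)$ there as well. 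Together with $|g(y)|=1$ this gives $|R_H(y)|=O(1)$, and combining the two regimes yields the stated error $O(\min(1,(H\|y\|)^{-1}))$. The main obstacle is exactly this last estimate: the Lebesgue constant $\int_{\mathbb R/\mathbb Z}|D_H|\asymp\log H$ forbids a global absolute-value bound, so the $O(1)$ must be extracted from genuine cancellation in the side lobes of $D_H$ — the Gibbs-type uniform boundedness of the partial sums across a jump of size $|1-e(-x)|\le 2$ — and it is the delicate, $x$-uniform part of the argument.
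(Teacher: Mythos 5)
The paper offers no proof of this lemma at all: it simply cites Lemma 12 of Buriev's unpublished (Russian) thesis. Your argument is therefore necessarily a different route, and it is essentially correct and self-contained: recognizing $c_h(x)=\int_0^1 e(-(x+h)t)\,dt$ as the $h$-th Fourier coefficient of the $1$-periodic function $g(t)=e(-x\{t\})$, estimating the symmetric tail by Abel summation against the geometric-sum bound $1/(2\|y\|)$ in the regime $\|y\|\ge 1/H$, and proving uniform boundedness of the Dirichlet partial sums across the jump in the regime $\|y\|<1/H$ is the classical proof pattern for truncated expansions of $e(-x\{y\})$, and it makes visible exactly where each piece of the error comes from. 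A pleasant side effect is that your Fourier-coefficient reading silently repairs the $0/0$ in the printed formula for $c_h(x)$ when $h+x=0$ (the coefficient is then $1$, and the formula remains valid in the limiting sense).

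Two caveats, one cosmetic and one substantive. Cosmetically: off the central lobe the factor $(\sin\pi(y-t))^{-1}$ is not $O(1)$ but grows to size $\asymp H$ at the lobe's edge; the single integration by parts still yields $O(1)$, because the boundary terms are $\ll H/(2H+1)$, the term with $\int_{\|y-t\|\ge 1/H}\sin^{-2}(\pi(y-t))\,dt\ll H$ is damped by the factor $1/(2H+1)$, and the $g'$ term contributes $\ll |x|(\log H)/H$ --- you should state the estimate this way rather than calling the factor ``bounded''. Substantively: your restriction ``$x$ in a bounded range'' is not a convenience but a necessity, so you have in fact detected that the lemma as printed (all $x\in\mathbb{R}$, absolute implied constant) is too strong. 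Indeed, take $x=H+\frac12$ and $y=\frac12$: then $1-e(-x)=2$ and $\sum_{|h|\le H}c_h(x)e(h/2)=\frac{(-1)^H}{\pi i}\sum_{j=0}^{2H}\frac{(-1)^j}{j+1/2}$, whose modulus tends to $\frac12$, while the left-hand side has modulus $1$; the error is thus $\asymp 1$ against the claimed $O(1/H)$. Since the paper only ever applies the lemma with $x=\alpha\in(0,1)$ (in Lemmas \ref{Salphaest} and \ref{Aest}), your bounded-range version fully suffices for everything the paper uses it for.
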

\begin{proof}
See (\cite{Buriev}, Lemma 12).
\end{proof}

\begin{lemma}\label{Heath-Brown} Let $G(n)$ be a complex valued function.
Assume further that
\begin{align*}
&P>2\,,\quad P_1\le 2P\,,\quad  2\le U<V\le Z\le P\,,\\
&U^2\le Z\,,\quad 128UZ^2\le P_1\,,\quad 2^{18}P_1\le V^3\,.
\end{align*}
Then the sum
\begin{equation*}
\sum\limits_{P<n\le P_1}\Lambda(n)G(n)
\end{equation*}
can be decomposed into $O\Big(\log^6P\Big)$ sums, each of which is either of Type I
\begin{equation*}
\mathop{\sum\limits_{M<m\le M_1}a(m)\sum\limits_{L<l\le L_1}}_{P<ml\le P_1}G(ml)
\end{equation*}
and
\begin{equation*}
\mathop{\sum\limits_{M<m\le M_1}a(m)\sum\limits_{L<l\le L_1}}_{P<ml\le P_1}G(ml)\log l\,,
\end{equation*}
where
\begin{equation*}
L\ge Z\,,\quad M_1\le 2M\,,\quad L_1\le 2L\,,\quad a(m)\ll \tau _5(m)\log P
\end{equation*}
or of Type II
\begin{equation*}
\mathop{\sum\limits_{M<m\le M_1}a(m)\sum\limits_{L<l\le L_1}}_{P<ml\le P_1}b(l)G(ml)
\end{equation*}
where
\begin{equation*}
U\le L\le V\,,\quad M_1\le 2M\,,\quad L_1\le 2L\,,\quad
a(m)\ll \tau _5(m)\log P\,,\quad b(l)\ll \tau _5(l)\log P\,.
\end{equation*}
\end{lemma}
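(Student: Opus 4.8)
The plan is to derive the decomposition from the Heath--Brown identity of order $k=3$, followed by a dyadic dissection of the resulting variables and a combinatorial sort of the dyadic pieces into the two admissible shapes. Writing $\mu$ for the M\"obius function and putting $z=(2P)^{1/3}$, the identity states that for every $n\le P_1\le 2P=2z^{3}$,
\[
\Lambda(n)=\sum_{j=1}^{3}(-1)^{j-1}\binom{3}{j}
\sum_{\substack{m_1\cdots m_j\,n_1\cdots n_j=n\\ m_1,\ldots,m_j\le z}}
\mu(m_1)\cdots\mu(m_j)\,\log n_1 .
\]
Inserting this into $\sum_{P<n\le P_1}\Lambda(n)G(n)$ and expanding the $O(1)$ outer terms turns the sum into a bounded number of inner sums of the form
\[
\sum_{\substack{m_1\cdots m_j\,n_1\cdots n_j\in(P,P_1]\\ m_1,\ldots,m_j\le z}}
\mu(m_1)\cdots\mu(m_j)\,(\log n_1)\,G(m_1\cdots m_j\,n_1\cdots n_j),\qquad 1\le j\le 3,
\]
in at most six summation variables: the factors $m_i$, which carry a M\"obius weight and obey $m_i\le z$, and the coefficient-free factors $n_i$, one of which is weighted by $\log n_1$.

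Next I would split each of the (at most six) variables into dyadic blocks $(W,2W]$. Since the product of all the variables is pinned to $(P,P_1]$, the length of one block is determined by the others up to a bounded factor, so the whole family is exhausted by $O(\log^{6}P)$ dyadic sub-sums; the exponent $6=2\cdot 3$ and the divisor $\tau_5$ occurring in the conclusion are both consequences of the choice $k=3$. Inside a fixed sub-sum each $m_i$ runs over a range $(M_i,2M_i]$ with $M_i\le z$, each $n_i$ over a range $(N_i,2N_i]$, and $\prod_i M_i\prod_i N_i\asymp P$. Collapsing any collection of at most five of these factors into a single variable $m$ produces a coefficient $a(m)$ equal to a signed count of factorisations of $m$ into at most five parts, so that $a(m)\ll\tau_5(m)\log P$ by the elementary divisor bound, the extra $\log P$ absorbing $\log n_1\le\log P_1\ll\log P$; the same estimate holds for any variable $l$ assembled from several factors.

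The crux is to reorganise the variables of each dyadic sub-sum into exactly two groups realising one of the two shapes. The Type I outcome arises when a coefficient-free factor is long: if some $n_i$ has length $N_i\ge Z$, I peel it off as the single free variable $l$ (retaining a $\log$ if it is the distinguished factor) and merge the remaining $\le 5$ factors into $m$, obtaining a Type I sum with $L\ge Z$ and $a(m)\ll\tau_5(m)\log P$. If no coefficient-free factor reaches $Z$, I instead aim for a Type II splitting, grouping the factors so that the product $L$ of one group lies in $[U,V]$ while the complementary product $m$ keeps a divisor-bounded coefficient; as at most five factors are ever collapsed together, both coefficients satisfy $a(m),b(l)\ll\tau_5(\cdot)\log P$. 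The essential point is a case analysis on the dyadic lengths---whether an oversize coefficient-free factor of length in $(V,Z)$ must be absorbed into $m$, whether a single factor already lies in the window $[U,V]$, or whether the window has to be filled by accumulating several short factors---and it is here that the three hypotheses $U^2\le Z$, $128\,UZ^{2}\le P_1$ and $2^{18}P_1\le V^{3}$ enter: they calibrate $U,V,Z$ against the scale $z=(2P)^{1/3}$ so tightly that every configuration of the six dyadic lengths is forced into one of the two shapes. Verifying this dichotomy in every size pattern is the main obstacle; it is elementary but demands patient bookkeeping, and it is the only place where the precise form of the hypotheses is used. Once it is settled, summing the resulting $O(\log^{6}P)$ Type I and Type II pieces reconstitutes $\sum_{P<n\le P_1}\Lambda(n)G(n)$ and finishes the argument.
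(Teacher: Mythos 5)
The paper supplies no argument of its own for this lemma---its ``proof'' is the citation to Heath-Brown \cite{Heath1}---and your outline reconstructs exactly that route: the generalized Vaughan identity of order $k=3$ (note the small slip: with $z=(2P)^{1/3}$ one has $z^{3}=2P\ge P_1$, not ``$2P=2z^{3}$''; the identity needs $n\le z^{3}$, which is what actually holds), a dyadic dissection of the six variables giving $O(\log^{6}P)$ pieces, and the bound $a(m)\ll\tau_5(m)\log P$ from collapsing at most five factors. All of that is sound and is precisely what the cited source does. The genuine gap is at the point you yourself flag as ``the main obstacle'': the claim that every configuration of dyadic sizes can be sorted into Type I or Type II is asserted, not proved, and that claim \emph{is} the nontrivial content of the lemma---everything preceding it is routine. ``Patient bookkeeping'' cannot be left to the reader here, particularly because the naive version of your third case fails: when every remaining factor is $<U$, greedy accumulation produces a subproduct lying only in $[U,2U^{2}]\subseteq[U,Z]$ (this is where $U^{2}\le Z$ enters), and since only $V\le Z$ is assumed, this subproduct may well land in $(V,Z]$, outside the Type II window $[U,V]$; your text never explains why this configuration cannot survive.

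Closing the case analysis requires the interplay you allude to but do not execute. Since $V^{3}\ge 2^{18}P_1>2^{17}z^{3}$, every M\"obius-weighted factor satisfies $m_i\le z<V$, so only the coefficient-free factors can be oversize, and any one of size $\ge Z$ yields Type I with a genuinely smooth inner variable. In the residual case, at most two subproducts can exceed $V$: three would force the total beyond $V^{3}\ge 2^{18}P_1$, contradicting $n\le P_1$. Hence if the greedy subproduct overshoots into $(V,Z]$, counting it together with the oversize smooth factors already present produces three subproducts exceeding $V$ and thus a contradiction, so the greedy subproduct in fact lands in $[U,V]$; meanwhile $128UZ^{2}\le P_1$ guarantees that after removing the at most two oversize factors (each $<Z$) the residual product of small factors still exceeds $U$, so the greedy crossing point exists at all. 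Supplying this chain of implications---or simply citing Heath-Brown's Lemma~3 as the paper does---is what turns your outline into a proof; as written, the step on which the three numerical hypotheses act is exactly the step that is missing.
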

\begin{proof}
See (\cite{Heath1}).
\end{proof}

\begin{lemma}\label{Expansion}
For any real number $t$ and $H\geq1$, there holds
\begin{equation*}
\min\left(1, \frac{1}{H\|t\|}\right)
=\sum\limits_{h=-\infty}^{+\infty}a_he(h t)\,,
\end{equation*}
where
\begin{equation*}
a_h\ll\min\left(\frac{\log 2H}{H}, \frac{1}{|h|}, \frac{H}{|h|^2}, \right).
\end{equation*}
\begin{proof}
See (\cite{Heath2}, p. 245).
\end{proof}
\end{lemma}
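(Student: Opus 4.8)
The plan is to identify the $a_h$ as the genuine Fourier coefficients of the $1$-periodic function
\[
f(t)=\min\left(1,\frac{1}{H\|t\|}\right),
\]
and to read off all three bounds from repeated integration by parts. First observe that $f$ is even, continuous, and piecewise monotone on $[-1/2,1/2]$, hence of bounded variation; by the Dirichlet--Jordan criterion its Fourier series converges uniformly to $f$, which yields the asserted expansion with
\[
a_h=\int_{-1/2}^{1/2}f(t)e(-ht)\,dt.
\]
When $1\le H<2$ one has $H\|t\|<1$ throughout, so $f\equiv1$, $a_0=1$, $a_h=0$ for $h\ne0$, and every bound holds with a suitable absolute constant; I therefore assume $H\ge2$, in which case $f(t)=1$ for $0\le t\le 1/H$ and $f(t)=1/(Ht)$ for $1/H<t\le1/2$, extended evenly.

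The first bound is immediate: since $|e(-ht)|=1$,
\[
|a_h|\le\int_{-1/2}^{1/2}f(t)\,dt=2\left(\frac{1}{H}+\int_{1/H}^{1/2}\frac{dt}{Ht}\right)=\frac{2}{H}\left(1+\log\frac{H}{2}\right)\ll\frac{\log 2H}{H}.
\]
For the second bound I would integrate by parts once. Because $f$ is continuous and $1$-periodic, the boundary contributions at $\pm1/2$ cancel (there $f=2/H$ and $e(\mp h/2)=(-1)^h$), leaving
\[
a_h=\frac{1}{2\pi i h}\int_{-1/2}^{1/2}f'(t)e(-ht)\,dt,
\]
so that $|a_h|\le\tfrac{1}{2\pi|h|}\int_{-1/2}^{1/2}|f'(t)|\,dt$. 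The last integral is the total variation of $f$, equal to $2\int_{1/H}^{1/2}dt/(Ht^2)=2(H-2)/H<2$, whence $a_h\ll 1/|h|$.

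The third bound needs a second integration by parts, and this is the one delicate point. The derivative $f'$ vanishes on $(-1/H,1/H)$ and equals $-1/(Ht^2)$ for $1/H<t\le 1/2$ together with its odd reflection for negative $t$, but it has a jump of size $\asymp H$ at each transition point $t=\pm1/H$ (the one-sided values are $0$ and $\mp H$). Splitting $\int_{-1/2}^{1/2}f'e(-ht)\,dt$ at these kinks and integrating by parts on each outer interval, the boundary terms arising from the jump of $f'$ at $\pm1/H$ are of size $\ll H/|h|$, while the remaining integral is bounded by $\tfrac{1}{2\pi|h|}\int_{-1/2}^{1/2}|f''(t)|\,dt$ with $\int_{-1/2}^{1/2}|f''|=2\int_{1/H}^{1/2}2\,dt/(Ht^3)=2(H^2-4)/H\ll H$. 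Hence $\int_{-1/2}^{1/2}f'e(-ht)\,dt\ll H/|h|$, and combining with the previous display gives $a_h\ll H/|h|^2$. The crux throughout is precisely this: it is the order-$H$ jump of $f'$ at the transition $\|t\|=1/H$ that caps the decay at $H/|h|^2$, so the only thing to watch is that the second integration by parts correctly captures that jump; everything else is routine bookkeeping of boundary terms and variation integrals.
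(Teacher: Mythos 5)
Your proof is correct, and it in fact supplies what the paper omits: the paper's ``proof'' of this lemma is only a citation to Heath-Brown \cite{Heath2}, p.~245, so your proposal fills in the standard computation behind that reference rather than taking a competing route. All the steps check out. The function $f(t)=\min\left(1,\frac{1}{H\|t\|}\right)$ is even, $1$-periodic, continuous and of bounded variation, so Dirichlet--Jordan legitimately gives the uniformly convergent expansion with $a_h=\int_{-1/2}^{1/2}f(t)e(-ht)\,dt$; the trivial bound $\frac{2}{H}\left(1+\log\frac{H}{2}\right)\ll\frac{\log 2H}{H}$ is right; the first integration by parts is valid because $f$ is Lipschitz and the endpoint terms cancel by periodicity, and the total variation $2(H-2)/H<2$ gives $a_h\ll 1/|h|$. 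Your diagnosis of the third bound is exactly the crux: the jump of $f'$ of size $H$ at $\|t\|=1/H$ is what caps the decay at $H/|h|^2$, and your bookkeeping is consistent --- the jump terms contribute $\ll H/|h|$, the smooth part contributes $\frac{1}{2\pi|h|}\int_{-1/2}^{1/2}|f''|\ll H/|h|$ since $\int|f''|=2(H^2-4)/H\ll H$, and the additional kink of $f'$ at $\|t\|=1/2$ (one-sided values $\mp 4/H$) contributes only $\ll 1/(H|h|)$, so absorbing it as routine is justified. You also correctly dispose of the degenerate range $1\le H<2$, where $f\equiv 1$ and only the $a_0$ coefficient survives. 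Nothing is missing; if anything, your self-contained argument is a useful improvement over the bare citation, since it makes visible why the three regimes $|h|\ll H$ and $|h|\gg H$ produce the stated menu of bounds.
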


\begin{lemma}\label{intStautau}
For the sum denoted by \eqref{Salpha} we have
\begin{align*}
&\emph{(i)}\quad\quad\quad\;\,
\int\limits_{-\tau}^\tau|S(\alpha)|^2\,dt\,\ll X^{2-c}\log^2X\,,\\ &\emph{(ii)}\quad\quad\quad\,
\int\limits_{0}^{1}|S(\alpha)|^2\,dt\ll X\log X\,.
\end{align*}
\end{lemma}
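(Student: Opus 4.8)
The plan is to exploit the fact that on the range $(\Delta_1,\Delta_2]$ the map $p\mapsto a_p:=[p^c\tan^\theta(\log p)]$ is injective with widely separated values, so that both bounds reduce, via orthogonality, to elementary prime counting. First I would record the size of the derivative of $t(y)=y^c\tan^\theta(\log y)$. Since $\log\Delta_1\equiv\arctan1\ (\mathrm{mod}\ \pi)$ and $\log\Delta_2\equiv\arctan2\ (\mathrm{mod}\ \pi)$ by \eqref{Delta1}--\eqref{Delta2}, periodicity of $\tan$ gives $\tan(\log y)\in[1,2]$ and hence $\sec^2(\log y)\in[2,5]$ for $y\in[\Delta_1,\Delta_2]$. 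From \eqref{y'} (equivalently by differentiating \eqref{ylogyt}) one then gets $t'(y)=y^{c-1}\tan^{\theta-1}(\log y)\big(c\tan(\log y)+\theta\sec^2(\log y)\big)\asymp X^{c-1}$, using $\Delta_1\asymp\Delta_2\asymp X$; in particular $t'(y)\to\infty$.

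The separation property follows at once. For distinct primes $p_1,p_2\in(\Delta_1,\Delta_2]$ we have $|p_1-p_2|\ge1$, so by the mean value theorem $|t(p_1)-t(p_2)|\asymp X^{c-1}|p_1-p_2|>1$ for $X$ large; since $a_{p_i}=t(p_i)+O(1)$, this yields $a_{p_1}\ne a_{p_2}$ and, more precisely, $|a_{p_1}-a_{p_2}|\asymp X^{c-1}|p_1-p_2|$. For part (ii), orthogonality gives $\int_0^1|S(\alpha)|^2\,d\alpha=\sum_{a_{p_1}=a_{p_2}}\log p_1\log p_2$, and by injectivity only the pairs $p_1=p_2$ survive, leaving $\sum_p\log^2 p\ll\log X\sum_{p\le\Delta_2}\log p\ll X\log X$ by Chebyshev's estimate.

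For part (i) I would expand $|S(\alpha)|^2$ and use $\big|\int_{-\tau}^{\tau}e(\alpha m)\,d\alpha\big|\ll\min\!\big(\tau,1/|m|\big)$. The diagonal $p_1=p_2$ contributes $2\tau\sum_p\log^2 p\ll\tau X\log X=X^{2-c-\varepsilon}\log X$, which is within the target. The off-diagonal is bounded by
\[
\sum_{p_1\ne p_2}\frac{\log p_1\log p_2}{|a_{p_1}-a_{p_2}|}\ll X^{1-c}\sum_{p_1\ne p_2}\frac{\log p_1\log p_2}{|p_1-p_2|}.
\]
For fixed $p_1$ the inner sum over $p_2$ is $\ll\log X\sum_{d\le X}d^{-1}\cdot(\text{at most two primes at distance }d)\ll\log^2 X$, and summing $\log p_1$ against $\sum_{p_1}\log p_1\ll X$ gives $\ll X\log^2 X$ for the double sum; multiplying by $X^{1-c}$ produces $X^{2-c}\log^2 X$. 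Combining the diagonal and off-diagonal terms establishes (i).

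The only real subtlety, and the step I would flag as the main obstacle, is the off-diagonal estimate in (i): one must resist the temptation to bound it by ``$O(1)$ primes per unit value of $a_p$'' and sum $1/k$ over $k\le X^c$, which wastes a factor $X^{c-1}$ and overshoots the target. The correct route is to parametrise by the prime gap $|p_1-p_2|$ and insert the separation $|a_{p_1}-a_{p_2}|\asymp X^{c-1}|p_1-p_2|$ from $t'(y)\asymp X^{c-1}$, since this sparsity of the values $a_p$ is precisely what supplies the decisive saving of $X^{1-c}$.
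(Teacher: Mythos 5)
Your proposal is correct: the key points — that $\tan(\log y)\in[1,2]$ on $(\Delta_1,\Delta_2]$ forces $t'(y)\asymp X^{c-1}$, hence the values $[p^c\tan^\theta(\log p)]$ are distinct and spaced like $X^{c-1}|p_1-p_2|$, after which (ii) follows by orthogonality and (i) by the $\min(\tau,1/|m|)$ bound with the diagonal/off-diagonal split — constitute a complete and accurate argument giving exactly the stated bounds. The paper itself offers no inline proof, only the citation ``it follows from the arguments used in (Dimitrov, Lemma 8)'' to the author's earlier paper, and the argument invoked there is essentially the same spacing-plus-expansion computation you reconstructed, so your proof matches the intended approach (and has the merit of being self-contained).
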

\begin{proof}
It follows from the arguments used in (\cite{Dimitrov}, Lemma 8).
\end{proof}

\section{Proof of the Theorem}
\indent

From \eqref{Gamma}, \eqref{Salpha},  \eqref{Gamma1}  and \eqref{Gamma2} we have
\begin{equation}\label{GammaGamma12}
\Gamma=\int\limits_{0}^{1}S^3(\alpha)e(-N\alpha)\,d\alpha=\Gamma_1+\Gamma_2\,.
\end{equation}

\subsection{Estimation of $\mathbf{\Gamma_1}$}

We write
\begin{equation}\label{Gamma1decomp}
\Gamma_1=(\Gamma_1-\widetilde{\Psi})+(\widetilde{\Psi}-\Psi_3)+\Psi_3\,.
\end{equation}
Bearing in mind \eqref{Theta} and \eqref{Psik} we obtain
\begin{align*}
\Psi_1&=\int\limits_{-1/2}^{1/2}\Theta(\alpha)e(-N\alpha)\,d\alpha\\
&=\frac{y^{1-c}(N)}{\Big(c\tan\big(\log y(N)\big)+\theta\sec^2\big(\log y(N)\big)\Big)\tan^{\theta-1}\big(\log y(N)\big)}\,.
\end{align*}
Suppose that
\begin{equation}\label{Supposition}
\Psi_k=\frac{y^{1-c}(N)}{\Big(c\tan\big(\log y(N)\big)+\theta\sec^2\big(\log y(N)\big)\Big)\tan^{\theta-1}\big(\log y(N)\big)}\,
X^{k-1}+\mathcal{O}\Big(X^{k-2}\Big)
\end{equation}
for
\begin{equation*}
k\geq2\,.
\end{equation*}
From \eqref{XN} and \eqref{ylogyt} it follows that
\begin{equation}\label{yNX}
y(N)=\Delta_2\,.
\end{equation}                                                       
Now  \eqref{Delta1}, \eqref{Delta2}, \eqref{ylogyt}, \eqref{yDelta1Delta2}, \eqref{N1},  \eqref{Supposition} and \eqref{yNX} yields
\begin{align*}
&\Psi_{k+1}= \\
&\sum\limits_{N_1<m\leq N}\frac{y^{1-c}(m)}{\Big(c\tan\big(\log y(m)\big)
+\theta\sec^2\big(\log y(m)\big)\Big)\tan^{\theta-1}\big(\log y(m)\big)}  \\
&\times\left(\mathop{\sum\limits_{N_1<m_1\leq N-m}\cdots\sum\limits_{N_1<m_k\leq N-m}}_
{m_1+\cdots+m_k=N-m}\frac{y^{1-c}(m_1)}{\Big(c\tan\big(\log y(m_1)\big)+\theta\sec^2\big(\log y(m_1)\big)\Big)\tan^{\theta-1}\big(\log y(m_1)\big)}\right.\\
&\left.\cdots\frac{y^{1-c}(m_2)}{\Big(c\tan\big(\log y(m_2)\big)+\theta\sec^2\big(\log y(m_2)\big)\Big)\tan^{\theta-1}\big(\log y(m_2)\big)}\right)\\
&=\sum\limits_{N_1<m\leq N}\frac{y^{1-c}(m)}{\Big(c\tan\big(\log y(m)\big)+\theta\sec^2\big(\log y(m)\big)\Big)\tan^{\theta-1}\big(\log y(m)\big)}\\
&\times\left(\frac{y^{1-c}(N-m)}{\Big(c\tan\big(\log y(N-m)\big)+\theta\sec^2\big(\log y(N-m)\big)\Big)\tan^{\theta-1}\big(\log y(N-m)\big)}
X^{k-1}+\mathcal{O}\big(X^{k-2}\big)\right)\\
&=\sum\limits_{N_1<m< N-N_1}\frac{y^{1-c}(m)}{\Big(c\tan\big(\log y(m)\big)+\theta\sec^2\big(\log y(m)\big)\Big)\tan^{\theta-1}\big(\log y(m)\big)}\\
&\frac{y^{1-c}(N-m)}{\Big(c\tan\big(\log y(N-m)\big)+\theta\sec^2\big(\log y(N-m)\big)\Big)\tan^{\theta-1}\big(\log y(N-m)\big)}
X^{k-1}+\mathcal{O}\big(X^{k-1}\big)\\
&= \frac{y^{1-c}(N)}{\Big(c\tan\big(\log y(N)\big)+\theta\sec^2\big(\log y(N)\big)\Big)\tan^{\theta-1}\big(\log y(N)\big)}\,
X^k+\mathcal{O}\big(X^{k-1}\big)\,.
\end{align*}
Consequently the supposition \eqref{Supposition} is true. 
Bearing in mind \eqref{Delta2}, \eqref{Supposition} and \eqref{yNX} we deduce
\begin{equation}\label{PsikAsymptoticformula}
\Psi_k= \frac{\Delta_2^{1-c}}{2^\theta c+5\theta2^{\theta-1}}\,
X^{k-1}+\mathcal{O}\big(X^{k-2}\big) \quad \mbox{for} \quad k\geq2.
\end{equation}
Now the asymptotic formula \eqref{PsikAsymptoticformula} gives us
\begin{equation}\label{Psi3Asymptoticformula}
\Psi_3=\frac{\Delta_2^{1-c}}{2^\theta c+5\theta2^{\theta-1}}X^2+\mathcal{O}\big(X\big)\,.
\end{equation}
From \eqref{Gamma1} and \eqref{Psiwidetilde} we get
\begin{align}\label{Gamma1Psi1}
|\Gamma_1-\widetilde{\Psi}|&\ll\int\limits_{-\tau}^{\tau}
\big|S^3(\alpha)-\Theta^3(\alpha)\big|\,d\alpha\nonumber\\
&\ll\max\limits_{|\alpha|\leq \tau}\big|S(\alpha)-\Theta(\alpha)\big|
\left(\int\limits_{-\tau}^{\tau}|S(\alpha)|^2\,d\alpha
+\int\limits_{-1/2}^{1/2}|\Theta(\alpha)|^2\,d\alpha \right)\,.
\end{align}
By \eqref{Delta1}, \eqref{Delta2}, \eqref{ylogyt}, \eqref{yDelta1Delta2},
\eqref{N1}, \eqref{Theta} and \eqref{yNX} we obtain
\begin{equation}\label{1IntTheta}
\int\limits_{-1/2}^{1/2}|\Theta(\alpha)|^2\,d\alpha\ll  X^{2-c}\,.
\end{equation}
Now we shall estimate from above $|S(\alpha)-\Theta(\alpha)|$  for $|\alpha|\leq \tau$.\\
Using \eqref{Delta1}, \eqref{Delta2}, \eqref{tau} and \eqref{Salpha} we write
\begin{align}\label{Salphaest1}
S(\alpha)&=\sum\limits_{\Delta_1<p\leq \Delta_2} e\big(\alpha p^c\tan^\theta(\log p)\big)\log p
+\mathcal{O}\big(\tau X\big) \nonumber\\
&=\sum\limits_{\Delta_1<n\leq \Delta_2} \Lambda(n)e\big(\alpha n^c\tan^\theta(\log n)\big)
+\mathcal{O}\big( X^{1/2}\big)+\mathcal{O}\big(\tau X\big)\nonumber\\
&=\sum\limits_{\Delta_1<n\leq \Delta_2} \Lambda(n)e\big(\alpha n^c\tan^\theta(\log n)\big)
+\mathcal{O}\big(X^{1-\varepsilon}\big).
\end{align}
From $|\alpha|\leq \tau$, $y\geq N_1$  and Lemma \ref{ThetaS} we have that
\begin{equation}\label{SumInt}
\sum_{N_1<m\le y}e(m\alpha)=\int\limits_{N_1}^{y}e(\alpha t)\,dt+\mathcal{O}(1).
\end{equation}
Using \eqref{Delta1}, \eqref{Delta2}, \eqref{ylogyt}, \eqref{yDelta1Delta2}, \eqref{y'},  \eqref{tau},
\eqref{N1},  \eqref{Theta}, \eqref{yNX}, \eqref{SumInt} and partial summation we find
\begin{align}\label{LambdaTheta}
&\sum\limits_{\Delta_1<n\leq \Delta_2} \Lambda(n)e\big(\alpha n^c\tan^\theta(\log n)\big)
=\int\limits_{\Delta_1}^{\Delta_2}e\big(\alpha y^c\tan^\theta(\log y)\big)\,d\bigg(\sum\limits_{\Delta_1<n\leq y} \Lambda(n)\bigg) \nonumber \\
&=\int\limits_{\Delta_1}^{\Delta_2}e\big(\alpha y^c\tan^\theta(\log y)\big)\,dy+\mathcal{O}\bigg(X\exp\Big(-(\log X)^{1/3}\Big)\bigg) \nonumber \\
&=\int\limits_{N_1}^Ne(\alpha t)
\frac{y^{1-c}(t)}{\Big(c\tan\big(\log y(t)\big)+\theta\sec^2\big(\log y(t)\big)\Big)\tan^{\theta-1}\big(\log y(t)\big)}\,dt  \nonumber \\
&+\mathcal{O}\bigg(X\exp\Big(-(\log X)^{1/3}\Big)\bigg) \nonumber \\
&=\int\limits_{N_1}^N\frac{y^{1-c}(t)}{\Big(c\tan\big(\log y(t)\big)+\theta\sec^2\big(\log y(t)\big)\Big)\tan^{\theta-1}\big(\log y(t)\big)}\,
d\left(\int\limits_{N_1}^te(\alpha u)\,du\right) \nonumber \\
&+\mathcal{O}\bigg(X\exp\Big(-(\log X)^{1/3}\Big)\bigg)\nonumber \\
&=\int\limits_{N_1}^N\frac{y^{1-c}(t)}{\Big(c\tan\big(\log y(t)\big)+\theta\sec^2\big(\log y(t)\big)\Big)\tan^{\theta-1}\big(\log y(t)\big)}\, d\Bigg(\sum_{N_1<m\le t}e( m\alpha)+\mathcal{O}(1)\Bigg)\nonumber \\
&+\mathcal{O}\bigg(X\exp\Big(-(\log X)^{1/3}\Big)\bigg)\nonumber \\
&=\sum\limits_{N_1< m\leq N}\frac{y^{1-c}(m)}{\Big(c\tan\big(\log y(m)\big)+\theta\sec^2\big(\log y(m)\big)\Big)\tan^{\theta-1}\big(\log y(m)\big)}
\,e(m\alpha)\nonumber \\
&+\mathcal{O}\bigg(X\exp\Big(-(\log X)^{1/3}\Big)\bigg)\nonumber \\
&=\Theta(\alpha)+\mathcal{O}\bigg(X\exp\Big(-(\log X)^{1/3}\Big)\bigg)\,.
\end{align}
By \eqref{Salphaest1} and \eqref{LambdaTheta} it follows that
\begin{equation}\label{SalphaThetaalpha}
\max\limits_{|\alpha|\leq \tau}|S(\alpha)-\Theta(\alpha)|\ll X\exp\Big(-(\log X)^{1/3}\Big)\,.
\end{equation}
Taking into account \eqref{Gamma1Psi1}, \eqref{1IntTheta}, \eqref{SalphaThetaalpha} and Lemma \ref{intStautau} we get
\begin{equation}\label{Gamma1widetildePsiest}
\Gamma_1-\widetilde{\Psi}\ll X^{3-c}\exp\Big(-(\log X)^{1/3-\varepsilon}\Big)\,.
\end{equation}
Using  \eqref{Delta1}, \eqref{Delta2}, \eqref{ylogyt}, \eqref{yDelta1Delta2}, \eqref{y'},  \eqref{tau}, \eqref{N1},
\eqref{Theta}, \eqref{Psik}, \eqref{Psiwidetilde}, \eqref{yNX} and working as in (\cite{Vaughan}, Lemma 2.8)  we deduce
\begin{equation}\label{Psi3-widetildePsi}
\big|\Psi_3-\widetilde{\Psi}\big|\ll\int\limits_{\tau\leq|\alpha|\leq1/2}
|\Theta(\alpha)|^3\,d\alpha\ll\int\limits_{\tau}^{1/2}\alpha^{-\frac{3}{c}}\,d\alpha
\ll X^{3-c-\varepsilon}\,.
\end{equation}
Summarizing \eqref{Gamma1decomp}, \eqref{Psi3Asymptoticformula}, \eqref{Gamma1widetildePsiest}
and \eqref{Psi3-widetildePsi} we obtain
\begin{equation}\label{Gamma1asymptotic}
\Gamma_1=\frac{\Delta_2^{1-c}}{2^\theta c+5\theta2^{\theta-1}}X^2
+\mathcal{O}\bigg(X^{3-c}\exp\Big(-(\log X)^{1/3-\varepsilon}\Big)\bigg)\,.
\end{equation}

\subsection{Estimation of $\mathbf{\Gamma_2}$}

\begin{lemma}\label{SIest} Assume that
\begin{equation}\label{taualphatau1}
\tau \leq \alpha \leq 1-\tau\,.
\end{equation}
Set \begin{equation}\label{SI}
S_I=\sum\limits_{|h|\leq H}c_h(\alpha)\mathop{\sum\limits_{M<m\le M_1}a(m)\sum\limits_{L<l\le L_1}}_{\Delta_1<ml\le \Delta_2}
e\big((h+\alpha) m^cl^c \tan^\theta\big(\log (ml) \big)\big)
\end{equation}
and
\begin{equation}\label{SI'}
S'_I=\sum\limits_{|h|\leq H}c_h(\alpha)\mathop{\sum\limits_{M<m\le M_1}a(m)\sum\limits_{L<l\le L_1}}_{\Delta_1<ml\le \Delta_2}
e\big((h+\alpha) m^cl^c \tan^\theta\big(\log (ml) \big)\big)\log l\,,
\end{equation}
where
\begin{equation}\label{Conditions1}
L\ge X^{\frac{22}{45}}\,,\quad M_1\le 2M\,,\quad L_1\le 2L\,,\quad a(m)\ll \tau _5(m)\log \Delta_1\,,\quad H= X^{\frac{4-3c}{15}} \end{equation}
and $c_h(\alpha)$ denote complex numbers such that $|c_h(\alpha)|\ll (1+|h|)^{-1}$.\\
Then
\begin{equation*}
S_I,\, S'_I\ll  X^{\frac{11+3c}{15}+\varepsilon}\,.
\end{equation*}                             \end{lemma}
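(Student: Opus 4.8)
The plan is to treat $S_I$ as a Type I sum: apply van der Corput's method to the long smooth variable $l$, sum the coefficient $a(m)$ trivially, and dispose of the $h$-summation using the decay $|c_h(\alpha)|\ll(1+|h|)^{-1}$. The estimate for $S_I'$ will follow from that for $S_I$ by partial summation in $l$ (since $\log l\asymp\log X$ is monotone and bounded), costing only a factor absorbed into $X^\varepsilon$, so I focus on $S_I$. Writing $\beta=h+\alpha$ and fixing $h$, the inner object is $\sum_m a(m)\sum_l e(f(l))$ with $f(l)=\beta\,m^c l^c\tan^\theta(\log(ml))$. The decisive structural point is that for $\Delta_1<ml\le\Delta_2$ the quantity $\log(ml)$ lies in $[\pi[\log X/\pi]+\arctan1,\ \pi[\log X/\pi]+\arctan2]$, so by periodicity $\tan(\log(ml))\in[1,2]$; hence $\tan^\theta(\log(ml))$, together with the derivatives of $\log\mapsto\tan^\theta$, stays bounded and bounded away from $0$ uniformly on the range. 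Thus $f$ is as smooth as $y^c$, up to constants depending only on $c,\theta$.

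For fixed $m$ and $h$ I would estimate the $l$-sum by Lemma~\ref{GrahamandKolesnik}. Put $F=|\beta|(ml)^c\asymp|\beta|X^c$, using $ml\asymp X$ (equivalently $ML\asymp X$, since $m\asymp M$, $l\asymp L$ and $\Delta_1\asymp\Delta_2\asymp X$). The key computation, and the main technical obstacle, is to verify the hypothesis $|f^{(r)}(l)|\asymp F\,L^{-r}$ for $r=1,2,3$: differentiating $f$ produces lower-order terms carrying $\tan^{\theta-1}$, $\sec^2$ and their derivatives, all of which are $\asymp1$ on our interval, so the leading behaviour $f^{(r)}(l)\asymp\beta m^c l^{c-r}\asymp FL^{-r}$ survives with implied constants depending only on $c,\theta$. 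Choosing $k=1$ (so $Q=2$, $4Q-2=6$, $k+2=3$) yields $\big|\sum_l e(f(l))\big|\ll F^{1/6}L^{1/2}+F^{-1}L$.

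Summing over $m$ with $\sum_{M<m\le M_1}|a(m)|\ll M\log^5X$ and using $ML\asymp X$, the contribution of a fixed $h$ is $\ll X^{1+\varepsilon}L^{-1/2}(|\beta|X^c)^{1/6}+X^{1-c+\varepsilon}|\beta|^{-1}$. I then sum over $|h|\le H$ against $(1+|h|)^{-1}$. Since $|\beta|=|h+\alpha|\asymp|h|$ for $|h|\ge1$, while $|\beta|\ge\tau=X^{1-c-\varepsilon}$ throughout (because $\tau\le\alpha\le1-\tau$ forces $|\beta|\ge\tau$ also for $h=0,-1$), the first piece sums to $\ll X^{1+\varepsilon}L^{-1/2}X^{c/6}H^{1/6}$, while the second is dominated by $h=0,-1$ and is $\ll X^{1-c+\varepsilon}/\tau\ll X^{\varepsilon}$, hence negligible (recall $c>1$).

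It remains to substitute the parameters, which is routine. With $H=X^{(4-3c)/15}$ and $L\ge X^{22/45}$ the main term has exponent
\[
1+\frac{c}{6}+\frac{4-3c}{90}-\frac{11}{45}=\frac{72+12c}{90}=\frac{12+2c}{15},
\]
and $\frac{12+2c}{15}\le\frac{11+3c}{15}$ precisely when $c\ge1$. Therefore $S_I,\,S_I'\ll X^{(11+3c)/15+\varepsilon}$, as claimed. The whole argument hinges on the derivative asymptotics of the $\tan^\theta$-twisted phase, which depend crucially on the interval $[\Delta_1,\Delta_2]$ keeping $\log y$ away from the poles and zeros of the tangent; once these are secured, the exponent bookkeeping and the choice $k=1$ are forced by the stated ranges of $H$ and $L$.
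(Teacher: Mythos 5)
Your proposal is correct and takes essentially the same route as the paper: a Type I treatment applying Lemma \ref{GrahamandKolesnik} with $k=1$ to the $l$-variable (after observing that $\tan(\log (ml))\in(1,2]$ on $(\Delta_1,\Delta_2]$ keeps all derivative terms positive and of size $\asymp|\beta|M^cL^{c-r}$), trivial summation over $m$, the decay $|c_h(\alpha)|\ll(1+|h|)^{-1}$ together with $|h+\alpha|\geq\tau$ to handle the $h$-sum, and Abel summation for $S_I'$. Even your final bookkeeping, arriving at exponent $\frac{12+2c}{15}\leq\frac{11+3c}{15}$ for $c\geq1$, matches the paper's computation.
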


\begin{proof}
First we notice that \eqref{Delta1}, \eqref{Delta2}, \eqref{SI}  and \eqref{Conditions1} imply \begin{equation}\label{LMasympX}
LM\asymp X\,.
\end{equation}
Denote
\begin{equation}\label{flm}
f_h(l, m)=m^cl^c \tan^\theta\big(\log (ml) \big)\,.
\end{equation}
By  \eqref{Delta1}, \eqref{SI}, \eqref{Conditions1} and \eqref{flm}  we write
\begin{equation}\label{SIest1}
S_I\ll X^\varepsilon\max\limits_{|\eta|\in [\tau, H+1]}
\sum\limits_{M<m\leq M_1}\bigg|\sum\limits_{L'<l\leq L'_1}e\big(\eta f(l, m)\big)\bigg|\,,
\end{equation}
where
\begin{equation}\label{L'L1'}
L'=\max{\bigg\{L,\frac{\Delta_1}{m}\bigg\}}\,,\quad L_1'=\min{\bigg\{L_1,\frac{\Delta_2}{m}\bigg\}}\,.
\end{equation}
From  \eqref{Conditions1} and \eqref{L'L1'} for the sum in \eqref{SIest1} it follows
\begin{equation}\label{L'andL1'inL2L}
\left|\begin{array}{cccc}
M<m\leq M_1\quad \; \\
L'<l\leq L'_1  \quad \quad \\
\Delta_1<ml\le \Delta_2 \;\;\\
(L', L'_1]\subseteq (L, 2L]
\end{array}\right..
\end{equation}
On the other hand for the function defined by  \eqref{flm} we find
\begin{equation}\label{firstderivativel}
\frac{\partial f(l, m)}{\partial l}=m^cl^{c-1}\tan^{{\theta}-1}\big(\log(ml)\big)
\Big(c\tan\big(\log(ml)\big))+{\theta}\sec^2\big(\log(ml)\big)\Big)
\end{equation}
and
\begin{align}
\label{secondderivativel}
\frac{\partial^2f(l, m)}{\partial l^2} &= m^cl^{c-2}\tan^{\theta-2}\big(\log(ml)\big)
\Big(\big(2\theta\sec^2\big(\log(ml)\big)+c^2-c\big)\tan^2\big(\log(ml)\big)\nonumber\\
&+(2c-1)\theta\sec^2\big(\log(ml)\big)\tan\big(\log(ml)\big)+(\theta^2-\theta)\sec^4\big(\log(ml)\big)\Big)\,.
\end{align}
Now \eqref{Delta1}, \eqref{Delta2}, \eqref{Conditions1}, \eqref{L'andL1'inL2L}, \eqref{firstderivativel} and \eqref{secondderivativel} yields
\begin{equation}\label{firstderivativelasymp}
\frac{\partial f(d,l)}{\partial l}\asymp M^cL^{c-1}
\end{equation}
and
\begin{equation}\label{secondderivativelasymp}
\frac{\partial^2f(d,l)}{\partial l^2}\asymp  M^cL^{c-2}\,.
\end{equation}
Proceeding in the same way we get
\begin{equation}\label{thirdderivativelasymp}
\frac{\partial^3f(d,l)}{\partial l^3}\asymp M^cL^{c-3}\,.
\end{equation}
Using \eqref{tau}, \eqref{taualphatau1}, \eqref{Conditions1},
\eqref{LMasympX}, \eqref{SIest1}, \eqref{L'andL1'inL2L}, \eqref{firstderivativelasymp}, \eqref{secondderivativelasymp},
\eqref{thirdderivativelasymp} and Lemma \ref{GrahamandKolesnik} with  $k=1$ we obtain
\begin{align*}\label{SIest1}
S_I&\ll X^\varepsilon\max\limits_{|\eta|\in [\tau, H+1]}
\sum\limits_{M<m\leq M_1}\bigg(|\eta|^{\frac{1}{6}}M^{\frac{c}{6}} L^{\frac{c}{6}}L^{\frac{1}{2}}
+|\eta|^{-1}M^{-c}L^{1-c}\bigg)\\
&\ll X^\varepsilon\Big(M^{\frac{1}{2}}H^{\frac{1}{6}}X^{\frac{c}{6}+\frac{1}{2}}+\tau^{-1}X^{1-c}\Big)\\
&\ll   X^{\frac{11+3c}{15}+\varepsilon}\,.
\end{align*}
To estimate the sum defined by \eqref{SI'} we apply Abel's summation formula and proceed in the same way to deduce
\begin{equation*}
S_I'\ll   X^{\frac{11+3c}{15}+\varepsilon}\,.
\end{equation*} This proves the lemma.   \end{proof}

\begin{lemma}\label{SIIest} Assume that
\begin{equation}\label{taualphatau2}
\tau \leq \alpha\leq 1-\tau \,.
\end{equation}
Set
\begin{equation}\label{SII}
S_{II}=\sum\limits_{|h|\leq H}c_h(\alpha)\mathop{\sum\limits_{M<m\le M_1}a(m)\sum\limits_{L<l\le L_1}}_{\Delta_1<ml\le \Delta_2}b(l)
e\big((h+\alpha) m^cl^c \tan^\theta\big(\log (ml) \big)\big)\,,
\end{equation}
where
\begin{equation}
\begin{split}\label{Conditions2}
&2^{-11}X^{\frac{1}{45}}\leq L\leq 2^7X^{\frac{1}{3}}\,,\quad M_1\le 2M\,,\quad L_1\le 2L\,,\\
&a(m)\ll \tau _5(m)\log \Delta_1\,,\quad b(l)\ll \tau _5(l)\log \Delta_1 \,,\quad H= X^{\frac{4-3c}{15}}
\end{split}
\end{equation}
and $c_h(\alpha)$ denote complex numbers such that $|c_h(\alpha)|\ll (1+|h|)^{-1}$.\\
Then
\begin{equation*}
S_{II}\ll  X^{\frac{11+3c}{15}+\varepsilon}\,.
\end{equation*}
\end{lemma}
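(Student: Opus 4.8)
The plan is to follow the treatment of $S_I$ in Lemma~\ref{SIest}, inserting one extra averaging step to cope with the coefficient $b(l)$ that is now attached to the inner variable. First I would dispose of the sum over $h$: since $|c_h(\alpha)|\ll(1+|h|)^{-1}$, the harmonic sum $\sum_{|h|\le H}(1+|h|)^{-1}\ll\log X$ is absorbed into $X^\varepsilon$, and it suffices to bound, uniformly for $\tau\le|\eta|\le H+1$, the bilinear sum
\[
T(\eta)=\mathop{\sum_{M<m\le M_1}a(m)\sum_{L<l\le L_1}}_{\Delta_1<ml\le\Delta_2}b(l)\,e\big(\eta f(l,m)\big),
\]
with $f$ as in \eqref{flm}. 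The decisive structural feature is that $f(l,m)=m^cl^c\tan^\theta(\log(ml))$ depends only on the product $ml$; moreover, on the range \eqref{yDelta1Delta2} both $\tan(\log(ml))$ and $\sec^2(\log(ml))$ are $\asymp1$, so the clean derivative asymptotics \eqref{firstderivativelasymp}--\eqref{thirdderivativelasymp}, and their analogues in the $m$-variable, stay in force, while $LM\asymp X$ by \eqref{LMasympX}.

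Next I would strip one of the two coefficients by Cauchy--Schwarz and difference the remaining variable by the Weyl--van der Corput inequality (Lemma~\ref{Squareoutlemma}) with a parameter $Q$. Writing the square out and summing, the diagonal is controlled by $\sum_m|a(m)|^2$ and $\sum_l|b(l)|^2$, which are $\ll X^{1+\varepsilon}$ by the divisor bound on $\tau_5$, while each off-diagonal shift $q\neq0$ produces a one-variable exponential sum over the undifferenced variable. By the product structure its phase is a first difference of $\phi(u)=u^c\tan^\theta(\log u)$ in the shifted variable, to which Graham--Kolesnik (Lemma~\ref{GrahamandKolesnik}) applies at a suitable order $k$ with amplitude $F\asymp|\eta|\,|q|\,X^{c-1}$ times the length of the free variable, exactly as in the $k=1$ application inside Lemma~\ref{SIest}. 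The Graham--Kolesnik error term $F^{-1}N$ is negligible after summation because $|\eta|\ge\tau=X^{1-c-\varepsilon}$ by \eqref{tau} and \eqref{taualphatau2}. Optimising $Q$ (and $k$) balances the diagonal against the differenced sums; the numerical ranges in \eqref{Conditions2} and the value $H=X^{\frac{4-3c}{15}}$ are exactly the calibration that should make the optimised bound equal to $X^{\frac{11+3c}{15}+\varepsilon}$.

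The hard part will be obtaining the required strength uniformly across the \emph{whole} admissible range, and above all at the short end $L\asymp X^{1/45}$. There a crude Cauchy--Schwarz in the long variable leaves a diagonal of size $\asymp X^2/L$, which already exceeds the target, whereas differencing the short variable yields too little cancellation; reconciling these two failures is the crux. I expect to handle it by letting the choice of which variable to square out, and how far to difference, depend on the size of $L$ relative to $X^{1/3}$ and $X^{1/45}$, keeping the trigonometric factors $\asymp1$ via \eqref{yDelta1Delta2} so that Lemma~\ref{GrahamandKolesnik} applies with a single amplitude $F$ throughout, and then verifying by the same exponent bookkeeping as in Lemma~\ref{SIest} that the worst case over $L$ still lands at $X^{\frac{11+3c}{15}+\varepsilon}$.
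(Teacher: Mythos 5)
Your plan reproduces the paper's own skeleton: the $h$-sum is absorbed via $|c_h(\alpha)|\ll(1+|h|)^{-1}$, Cauchy--Schwarz strips $a(m)$, Lemma \ref{Squareoutlemma} differences the $l$-variable (the paper fixes $Q=X^{\frac{8-6c}{15}}$ once and for all, with no case analysis in $L$), and the resulting shifted sums over $m$ are estimated by Lemma \ref{GrahamandKolesnik} with $k=0$ rather than a variable order $k$. Up to that point proposal and paper agree; your amplitude $F\asymp|\eta|\,|q|\,X^{c-1}M$, with the factor $|q|$, is in fact more accurate than the derivative size $\asymp|h+\alpha|L^cM^{c-1}$ recorded in the paper's \eqref{firstderivativemasymp}.

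The genuine gap is precisely the crux you name and then leave open, and the remedy you sketch (choosing which variable to square out according to the size of $L$) cannot close it. If you square out $m$, the $q=0$ diagonal of Lemma \ref{Squareoutlemma} forces a bound of at least $M^{1/2}\big(M\sum_l|b(l)|^2\big)^{1/2}\asymp XL^{-1/2}$, which at $L\asymp X^{\frac{1}{45}}$ is $X^{\frac{89}{90}}$, above the target $X^{\frac{11+3c}{15}+\varepsilon}\le X^{\frac{20}{21}+\varepsilon}$ (choosing $Q\le L$ only worsens the diagonal to $XQ^{-1/2}$). If instead you square out $l$ and difference $m$, the diagonal is harmless, but the target then requires every shifted sum over $l$ to be $\ll LX^{-\frac{2(4-3c)}{15}}$, a saving exceeding the entire length of the sum, since $\frac{2(4-3c)}{15}>\frac{2}{21}>\frac{1}{45}$; that is impossible even with perfect cancellation. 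So every arrangement of Cauchy--Schwarz plus van der Corput differencing needs $L\gg X^{\frac{8-6c}{15}}$, while \eqref{Conditions2} admits $L$ down to $2^{-11}X^{\frac{1}{45}}$. You should also be aware that the paper does not overcome this obstacle either: when it applies Lemma \ref{Squareoutlemma}, the factor $1+\frac{L_1-L}{Q}$ is replaced by $\frac{L}{Q}$, which is legitimate only when $Q\ll L$, i.e. $L\gg X^{\frac{8-6c}{15}}$; on the sub-range $2^{-11}X^{\frac{1}{45}}\le L\ll X^{\frac{8-6c}{15}}$ the true diagonal is $\frac{X^2}{L}$ rather than $\frac{X^2}{Q}$, and the claimed bound does not follow. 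Your diagnosis of where the difficulty sits is therefore correct, but neither your plan nor the paper's written argument resolves it; doing so would require either moving the Type II range upward --- which the constraints of Lemma \ref{Heath-Brown} combined with the Type I requirement $L\ge X^{\frac{22}{45}}$ of Lemma \ref{SIest} forbid --- or an input beyond the bilinear Cauchy--Schwarz method.
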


\begin{proof}
First we notice that \eqref{Delta1}, \eqref{Delta2}, \eqref{SII}  and \eqref{Conditions2} give us
\begin{equation}\label{LMasympX2}
LM\asymp X\,.
\end{equation}
From \eqref{Delta1}, \eqref{flm}, \eqref{SII}, \eqref{Conditions2}, \eqref{LMasympX2}, Cauchy's inequality
and Lemma \ref{Squareoutlemma} with $Q= X^{\frac{8-6c}{15}}$ it follows
\begin{align*}
S_{II}&\ll\sum\limits_{|h|\leq H}\big|c_h(\alpha)\big|
\Bigg|\mathop{\sum\limits_{M<m\le M_1}a(m)\sum\limits_{L<l\le L_1}}_{\Delta_1<ml\le \Delta_2}b(l)e\big((h+\alpha)
m^cl^c \tan^\theta\big(\log (ml) \big)\big)\Bigg|  \nonumber\\  &\ll\sum\limits_{|h|\leq H}\big|c_h(\alpha)\big|\left(\sum\limits_{M<m\le M_1}|a(m)|^2\right)^{\frac{1}{2}}
\left(\sum\limits_{M<m\le M_1}\bigg|\sum\limits_{L<l\le L_1\atop{\Delta_1<ml\le \Delta_2}}
b(l)e\big((h+\alpha)f_h(l, m)\big)\bigg|^2\right)^{\frac{1}{2}}\nonumber\\
&\ll M^{\frac{1}{2}+\varepsilon}\sum\limits_{|h|\leq H}\big|c_h(\alpha)\big|
\left(\sum\limits_{M<m\le M_1}\frac{L}{Q}\sum_{|q|<Q}\bigg(1-\frac{q}{Q}\bigg)
\sum\limits_{L<l, \, l+q\leq L_1\atop{\Delta_1<ml\le \Delta_2\atop{\Delta_1<m(l+q)\le \Delta_2}}}b(l+q)\overline{b(l)}
e\big(f_h(l, m, q)\big)\right)^{\frac{1}{2}}\nonumber\\
\end{align*}
\begin{align}\label{SIIest1}
&\ll M^{\frac{1}{2}+\varepsilon}\sum\limits_{|h|\leq H}\big|c_h(\alpha)\big|
\Bigg(\frac{L}{Q}\sum\limits_{M<m\le M_1}\Bigg(L^{1+\eta}\nonumber\\
&\hspace{20mm}+\sum_{1\leq |q|<Q}\bigg(1-\frac{q}{Q}\bigg)
\sum\limits_{L<l, \, l+q\leq L_1\atop{\Delta_1<ml\le \Delta_2\atop{\Delta_1<m(l+q)\le \Delta_2}}}
b(l+q)\overline{b(l)}e\big(f_h(l, m, q)\big)\Bigg)^{\frac{1}{2}}\nonumber\\
&\ll X^\varepsilon\sum\limits_{|h|\leq H}\big|c_h(\alpha)\big| \Bigg(\frac{X^2}{Q}+\frac{X}{Q}\sum\limits_{1\leq |q|\leq Q}
\sum\limits_{L<l, \, l+q\leq L_1}\bigg|\sum\limits_{M'<m\leq M_1'}e\big(f_h(l, m, q)\big)\bigg|\Bigg)^{\frac{1}{2}}\,,
\end{align}
where
\begin{equation}\label{M'M1'}
M'=\max{\bigg\{M,\frac{\Delta_1}{l},\frac{\Delta_1}{l+q}\bigg\}}\,,
\quad M_1'=\min{\bigg\{M_1,\frac{\Delta_2}{l},\frac{\Delta_2}{l+q}\bigg\}}
\end{equation}
and
\begin{equation*}
f_h(l, m, q)=(h+\alpha) m^c\Big((l+q)^c \tan^\theta\big(\log (m(l+q))\big)-l^c \tan^\theta\big(\log (ml)\big)\Big)\,.
\end{equation*}
From  \eqref{Conditions2} and \eqref{M'M1'} for the sum  in \eqref{SIIest1} it follows
\begin{equation}\label{M'M1'inM2M}
\left|\begin{array}{ccccc}
L<l, \, l+q\leq L_1\quad\\
M'<m\leq M_1'\quad \quad \;\\
\Delta_1<ml\le \Delta_2 \quad\quad\;\\
\;\Delta_1<m(l+q)\le \Delta_2 \\
(M', M'_1]\subseteq (M, 2M]
\end{array}\right..
\end{equation}
We have
\begin{align}\label{firstderivativem}
\frac{\partial f_h(l, m, q)}{\partial m}
&=(h+\alpha) (q+l)^cm^{c-1}\tan^{\theta-1}\big(\log((q+l)m)\big)\nonumber\\
&\times\Big(c\tan\big(\log((q+l)m)\big)+\theta\sec^2\big(\log((q+l)m)\big)\Big)\nonumber\\
&-(h+\alpha) l^cm^{c-1}\tan^{\theta-1}\big(\log(ml)\big)
\Big(c\tan\big(\log(ml)\big))+{\theta}\sec^2\big(\log(ml)\big)\Big)
\end{align}
and
\begin{align}\label{secondderivativem}
\frac{\partial^2f_h(l, m, q)}{\partial m^2}
&=(h+\alpha) (q+l)^cm^{c-2}\tan^{\theta-2}\big(\log((q+l)m)\big)\nonumber\\
&\times\Big(\big(2\theta\sec^2\big(\log((q+l)m)\big)+c^2-c\big)\tan^2\big(\log((q+l)m)\big)\nonumber\\
&+(2c-1)\theta\sec^2\big(\log((q+l)m)\big)\tan\big(\log((q+l)m)\big)\nonumber\\
&+(\theta^2-\theta))\sec^4\big(\log((q+l)m)\big)\Big)\nonumber\\
&-(h+\alpha)l^cm^{c-2}\tan^{\theta-2}\big(\log(ml)\big)
\Big(\big(2\theta\sec^2\big(\log(ml)\big)+c^2-c\big)\tan^2\big(\log(ml)\big)\nonumber\\
&+(2c-1)\theta\sec^2\big(\log(ml)\big)\tan\big(\log(ml)\big)+(\theta^2-\theta)\sec^4\big(\log(ml)\big)\Big)\,.
\end{align}
From \eqref{Delta1}, \eqref{Delta2}, \eqref{Conditions2}, \eqref{M'M1'inM2M}, \eqref{firstderivativem} and \eqref{secondderivativem} we obtain
\begin{equation}\label{firstderivativemasymp}
\frac{\partial f_h(l, m, q)}{\partial m}\asymp |h+\alpha| L^c M^{c-1}
\end{equation}
and
\begin{equation}\label{secondderivativemasymp}
\frac{\partial^2f_h(l, m, q)}{\partial m^2}\asymp |h+\alpha| L^c M^{c-2}\,.
\end{equation}
Now \eqref{tau}, \eqref{taualphatau2}, \eqref{Conditions2}, \eqref{LMasympX2},
\eqref{SIIest1}, \eqref{M'M1'inM2M}, \eqref{firstderivativemasymp},
\eqref{secondderivativemasymp} and Lemma \ref{GrahamandKolesnik} with  $k=0$  imply
\begin{align*}\label{SIIest2}
S_{II}&\ll X^\varepsilon\sum\limits_{|h|\leq H}\big|c_h(\alpha)\big|\Bigg(\frac{X^2}{Q}+\frac{X}{Q}\sum\limits_{1\leq q\leq Q}
\sum\limits_{L<l\leq L_1}\bigg(|h+\alpha|^{\frac{1}{2}}L^{\frac{c}{2}} M^{\frac{c}{2}}
+|h+\alpha|^{-1}L^{-c}M^{1-c}\bigg)\Bigg)^{\frac{1}{2}}\\
&\ll X^\varepsilon\sum\limits_{|h|\leq H}\big|c_h(\alpha)\big|\Bigg(\frac{X^2}{Q}+X\Big(H^{\frac{1}{2}} L X^{\frac{c}{2}}
+\tau^{-1}L^{1-c}M^{1-c}\Big)\Bigg)^{\frac{1}{2}}\\
&\ll  X^{\frac{11+3c}{15}+\varepsilon}\sum\limits_{|h|\leq H}\big|c_h(\alpha)\big|
\ll X^{\frac{11+3c}{15}+\varepsilon}\sum\limits_{|h|\leq H}\frac{1}{1+|h|}\\
&\ll   X^{\frac{11+3c}{15}+\varepsilon}\,.
\end{align*}
This proves the lemma.
\end{proof}

\begin{lemma}\label{Salphaest} Let 
\begin{equation*}
\tau\leq \alpha \leq 1-\tau\,.
\end{equation*} Then  for the exponential sum denoted by \eqref{Salpha} we have
\begin{equation*}
S(\alpha)\ll  X^{\frac{11+3c}{15}+\varepsilon}\,.
\end{equation*}
\end{lemma}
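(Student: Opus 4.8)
The plan is to reduce $S(\alpha)$ to exponential sums over primes in which the floor function has been removed, decompose those by Heath--Brown's identity, and then quote Lemmas \ref{SIest} and \ref{SIIest}. First I would replace the sum over primes in \eqref{Salpha} by a sum over $\Lambda(n)$ at the cost of $O(X^{1/2+\varepsilon})$ for prime powers, then write $[n^c\tan^\theta(\log n)]=n^c\tan^\theta(\log n)-\{n^c\tan^\theta(\log n)\}$ and apply Lemma \ref{Buriev} with $x=\alpha$ and $H=X^{(4-3c)/15}$ to the factor $e(-\alpha\{\cdot\})$. Since $\tau\le\alpha\le 1-\tau$ one checks that the coefficients of Lemma \ref{Buriev} satisfy $|c_h(\alpha)|\ll(1+|h|)^{-1}$, which gives
\[
S(\alpha)=\sum_{|h|\le H}c_h(\alpha)\sum_{\Delta_1<n\le\Delta_2}\Lambda(n)\,e\big((h+\alpha)n^c\tan^\theta(\log n)\big)+O(R)+O\big(X^{1/2+\varepsilon}\big),
\]
where $R=\sum_{\Delta_1<n\le\Delta_2}\Lambda(n)\min\big(1,(H\|n^c\tan^\theta(\log n)\|)^{-1}\big)$.

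Next I would dispose of $R$ by expanding the minimum through Lemma \ref{Expansion}, writing $\min(1,(H\|y\|)^{-1})=\sum_h a_h e(hy)$ with $a_h\ll\min(H^{-1}\log 2H,|h|^{-1},H|h|^{-2})$. The frequency $h=0$ contributes $\ll a_0X\ll X(\log X)/H=X^{(11+3c)/15+\varepsilon}$, which is precisely why $H$ is chosen as in \eqref{Conditions1}; each $h\neq0$ produces a sum $\sum_n\Lambda(n)e(hn^c\tan^\theta(\log n))$ of exactly the shape handled below, bounded by $X^{(11+3c)/15+\varepsilon}$, while $\sum_{h\neq0}|a_h|\ll\log H$ keeps the total within the target.

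For the main term I would apply Heath--Brown's identity (Lemma \ref{Heath-Brown}) to $\sum_{\Delta_1<n\le\Delta_2}\Lambda(n)G(n)$ with $G(n)=e((h+\alpha)n^c\tan^\theta(\log n))$, $P=\Delta_1$, $P_1=\Delta_2$. Because the resulting dyadic decomposition depends only on the ranges $P,P_1,U,V,Z$ and not on $G$, I may carry it out once and then reinsert $\sum_{|h|\le H}c_h(\alpha)$ inside each of the $O(\log^6 X)$ pieces; this turns every Type~I piece into $S_I$ or $S'_I$ of \eqref{SI}, \eqref{SI'} and every Type~II piece into $S_{II}$ of \eqref{SII}. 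I would take $Z=X^{22/45}$, $U=2^{-11}X^{1/45}$ and $V=2^{7}X^{1/3}$, so that the Type~I sums obey $L\ge Z=X^{22/45}$ as in \eqref{Conditions1} and the Type~II sums obey $2^{-11}X^{1/45}\le L\le 2^{7}X^{1/3}$ as in \eqref{Conditions2}. Lemmas \ref{SIest} and \ref{SIIest} then bound each piece by $X^{(11+3c)/15+\varepsilon}$, and summing the $O(\log^6 X)$ contributions together with the error terms above yields the stated estimate.

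The main obstacle is verifying that this single choice of $U,V,Z$ simultaneously satisfies all six hypotheses of Lemma \ref{Heath-Brown} and matches the admissible ranges of Lemmas \ref{SIest} and \ref{SIIest}. The binding constraints are $128UZ^2\le P_1$ and $2^{18}P_1\le V^3$, and here the explicit relation $\Delta_2/\Delta_1=e^{\arctan 2-\pi/4}<2$ together with $\Delta_1,\Delta_2\asymp X$ (more precisely $0.13\,X<\Delta_2<3.1\,X$, coming from \eqref{XN}, \eqref{Delta1}, \eqref{Delta2}) is what makes the binary constants exactly sufficient: $128UZ^2=2^{-4}X<\Delta_2$ and $V^3=2^{21}X>2^{18}\Delta_2$. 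Once the parameters are pinned down the Graham--Kolesnik estimates internal to Lemmas \ref{SIest} and \ref{SIIest} are routine, so the whole difficulty is concentrated in this bookkeeping and in confirming that the minimum-term error $R$ stays below $X^{(11+3c)/15+\varepsilon}$.
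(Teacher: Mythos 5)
Your main-term treatment coincides with the paper's proof: the same reduction from primes to $\Lambda(n)$ at cost $O(X^{1/2})$, the same application of Lemma \ref{Buriev} with $H=X^{\frac{4-3c}{15}}$, the same Heath--Brown parameters $U=2^{-11}X^{\frac{1}{45}}$, $V=2^{7}X^{\frac{1}{3}}$, $Z=X^{\frac{22}{45}}$ (so that Type~I pieces satisfy \eqref{Conditions1} and Type~II pieces satisfy \eqref{Conditions2}), and the same appeal to Lemmas \ref{SIest} and \ref{SIIest}; your verification of $128UZ^2\le P_1$ and $2^{18}P_1\le V^3$ is exactly the right bookkeeping.

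The gap is in your disposal of the error term $R$. You keep the weight $\Lambda(n)$ inside $R$, expand the minimum by Lemma \ref{Expansion}, and then assert that \emph{each} $h\neq0$ yields a sum $\sum_{n}\Lambda(n)e\big(hn^c\tan^\theta(\log n)\big)$ bounded by $X^{\frac{11+3c}{15}+\varepsilon}$, so that $\sum_{h\neq0}|a_h|\ll\log H$ finishes the estimate. That uniform-in-$h$ bound is not justified and is not what the Type~I/II machinery gives: Lemma \ref{Expansion} produces \emph{all} integer frequencies, while Lemmas \ref{SIest} and \ref{SIIest} only cover $|h|\le H$ with coefficients $\ll(1+|h|)^{-1}$, and their internal estimates (the factors $|\eta|^{\frac16}$ and $|h+\alpha|^{\frac12}$ coming from Lemma \ref{GrahamandKolesnik}) grow with the frequency, so for $|h|>H$ neither the lemmas as stated nor any uniform bound applies. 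What controls the tail $|h|>H$ is the quadratic decay $a_h\ll H|h|^{-2}$ summed against a frequency-dependent bound, not $\sum_{h\neq 0}|a_h|\ll\log H$ against a uniform one. The paper sidesteps this entirely: it first bounds $\Lambda(n)\le\log X$ and pulls it out (see the error term in \eqref{Sastformula}), so the remaining sums $\sum_{\Delta_1<n\le\Delta_2}e\big(kn^c\tan^\theta(\log n)\big)$ are unweighted; then Lemma \ref{GrahamandKolesnik} with $k=0$ applies directly to every frequency, giving $\ll k^{\frac12}X^{\frac{c}{2}}+k^{-1}X^{1-c}$, and the weighted sum over all $k$ is computed in \eqref{suminest1}, where the growth $k^{\frac12}$ for $k>H$ is beaten by $a_k\ll Hk^{-2}$. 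Your route can be repaired the same way (drop the weight before expanding, which also makes Heath--Brown unnecessary for $R$), or by redoing the Type~I/II estimates with explicit $h$-dependence and summing against $a_h$; as written, the step fails for $|h|>H$.
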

\begin{proof}
In order to prove the lemma  we will use the formula
\begin{equation}\label{Lambdalog2}
S(\alpha)=S^\ast(\alpha)+\mathcal{O}\big(X^{\frac{1}{2}}\big)\,,
\end{equation}
where
\begin{equation}\label{Sast}
S^\ast(\alpha)=\sum\limits_{\Delta_1<n\leq\Delta_2}\Lambda(n)e\big(\alpha n^c\tan^\theta(\log n)\big)\,.
\end{equation}
By \eqref{Sast} and Lemma \ref{Buriev} with $x=\alpha$, $y=n^c$, $H= X^{\frac{4-3c}{15}}$ we get
\begin{align*}
S^\ast(\alpha)&=\sum\limits_{\Delta_1<n\leq\Delta_2}\Lambda(n)e\big(\alpha n^c\tan^\theta(\log n)-\alpha\{n^c\tan^\theta(\log n)\}\big) \nonumber\\
&=\sum\limits_{\Delta_1<n\leq\Delta_2}\Lambda(n)e\big(\alpha n^c\tan^\theta(\log n)\big)e\big(-\alpha \{n^c\tan^\theta(\log n)\}\big)\nonumber\\
&=\sum\limits_{\Delta_1<n\leq\Delta_2}\Lambda(n)e\big(\alpha n^c\tan^\theta(\log n) \nonumber\\
\end{align*}
\begin{align}\label{Sastformula}
&\times\left(\sum\limits_{|h|\leq H}c_h(\alpha)
e\big(\alpha n^c\tan^\theta(\log n) +\mathcal{O}\Bigg(\min\left(1, \frac{1}{H\|n^c\tan^\theta(\log n)\|}\right)\Bigg)\right)\nonumber\\
&=\sum\limits_{|h|\leq H}c_h(\alpha)\sum\limits_{\Delta_1<n\leq\Delta_2}\Lambda(n)e\big((h+\alpha)n^c\tan^\theta(\log n)\big)\nonumber\\
&+\mathcal{O}\Bigg((\log X)\sum\limits_{\Delta_1<n\leq\Delta_2}\min\left(1, \frac{1}{H\|n^c\tan^\theta(\log n)\|}\right)\Bigg)\nonumber\\
&=S_0^\ast(\alpha)+\mathcal{O}\left((\log X)\sum\limits_{\Delta_1<n\leq\Delta_2}\min\left(1, \frac{1}{H\|n^c\tan^\theta(\log n)\|}\right)\right)\,,
\end{align}
where
\begin{equation}\label{Sast0}
S_0^\ast(\alpha)=\sum\limits_{|h|\leq H}c_h(\alpha)\sum\limits_{\Delta_1<n\leq\Delta_2}\Lambda(n)e\big((h+\alpha)n^c\tan^\theta(\log n)\big)\,.
\end{equation}
Taking into account \eqref{Delta1} and \eqref{Delta2} we have that
\begin{equation}\label{Delta1Delta2inequalities}
\Delta_2<2\Delta_1\,, \quad  2^{-4}X<\Delta_1< 2^2X\,,  \quad  2^{-3}X<\Delta_2< 2^2X\,.
\end{equation} Let
\begin{equation*}
U=2^{-11}X^{\frac{1}{45}}\,,\quad V=2^7X^{\frac{1}{3}}\,,\quad Z=X^{\frac{22}{45}}\,.
\end{equation*}
According to Lemma \ref{Heath-Brown}, the sum $S_0^\ast(\alpha)$
can be decomposed into $O\Big(\log^6\Delta_1\Big)$ sums, each of which is either of Type I
\begin{equation*}
\sum\limits_{|h|\leq H}c_h(\alpha)\mathop{\sum\limits_{M<m\le M_1}a(m)\sum\limits_{L<l\le L_1}}_{\Delta_1<ml\le \Delta_2}
e\big((h+\alpha) m^cl^c \tan^\theta\big(\log (ml) \big)\big)
\end{equation*}
and
\begin{equation*}
\sum\limits_{|h|\leq H}c_h(\alpha)\mathop{\sum\limits_{M<m\le M_1}a(m)\sum\limits_{L<l\le L_1}}_{\Delta_1<ml\le \Delta_2}
e\big((h+\alpha)m^cl^c \tan^\theta\big(\log (ml) \big)\big)\log l\,,
\end{equation*}
where
\begin{equation*}
L\ge Z\,,\quad M_1\le 2M\,,\quad L_1\le 2L\,,\quad a(m)\ll \tau _5(m)\log \Delta_1
\end{equation*}
or of Type II
\begin{equation*}
\sum\limits_{|h|\leq H}c_h(\alpha)\mathop{\sum\limits_{M<m\le M_1}a(m)\sum\limits_{L<l\le L_1}}_{\Delta_1<ml\le \Delta_2}b(l)
e\big((h+\alpha) m^cl^c \tan^\theta\big(\log (ml) \big)\big)\,,
\end{equation*}
where
\begin{equation*}
U\le L\le V\,,\quad M_1\le 2M\,,\quad L_1\le 2L\,,\quad
a(m)\ll \tau _5(m)\log \Delta_1\,,\quad b(l)\ll \tau _5(l)\log \Delta_1\,.
\end{equation*}
Using \eqref{Sast0}, Lemma \ref{SIest} and  Lemma \ref{SIIest} we obtain
\begin{equation}\label{Sast0est}
S_0^\ast(\alpha)\ll  X^{\frac{11+3c}{15}+\varepsilon}\,.
\end{equation}                                                                                                                                   We have
\begin{equation}\label{firstderivativet}
\frac{\partial k y^c\tan^\theta(\log y)}{\partial y} =k y^{c-1}\tan^{\theta-1}(\log y)
\Big(c\tan(\log y)+\theta\sec^2(\log y)\Big)
\end{equation}
and
\begin{align}\label{secondderivativet}
\frac{\partial^2k y^c\tan^\theta(\log y)}{\partial y^2} &=k y^{c-2}\tan^{\theta-2}(\log y)
\Big(\big(2\theta\sec^2(\log y)+c^2-c\big)\tan^2(\log y)\nonumber\\
&+(2c-1)\theta\sec^2(\log y)\tan(\log y)+(\theta^2-\theta)\sec^4(\log y)\Big)\,.
\end{align}
From \eqref{Delta1}, \eqref{Delta2}, \eqref{Delta1Delta2inequalities}, \eqref{firstderivativet} and  \eqref{secondderivativet} it follows
\begin{equation}\label{firstderivativetest}
\frac{\partial k y^c\tan^\theta(\log y)}{\partial y}\asymp  |k|\Delta_1^{c-1} \quad \mbox{ for } \quad y\in [\Delta_1, \Delta_2]
\end{equation}
and
\begin{equation}\label{secondderivativetest}
\frac{\partial^2k y^c\tan^\theta(\log y)}{\partial y^2}\asymp  |k|\Delta_1^{c-2} \quad \mbox{ for } \quad y\in [\Delta_1, \Delta_2]\,.
\end{equation}
Using \eqref{Delta1}, \eqref{Delta2}, \eqref{Delta1Delta2inequalities}, \eqref{firstderivativetest}, \eqref{secondderivativetest},
Lemma \ref{GrahamandKolesnik} with  $k=0$  and Lemma \ref{Expansion} we write
\begin{align}\label{suminest1}
&\sum\limits_{\Delta_1<n\leq\Delta_2}\min\left(1, \frac{1}{H\|n^c\tan^\theta(\log n)\|}\right) \nonumber\\
&=\sum\limits_{\Delta_1<n\leq\Delta_2}\sum\limits_{k=-\infty}^{+\infty}a_k e\big(k n^c\tan^\theta(\log n)\big)
\ll \sum\limits_{k=-\infty}^{+\infty}|a_k| \Bigg|\sum\limits_{\Delta_1<n\leq\Delta_2} e\big(k n^c\tan^\theta(\log n)\big)\Bigg| \nonumber\\
&\ll \frac{X\log 2H}{H}+ \sum\limits_{1\leq k\leq H}\frac{1}{k}\Bigg|\sum\limits_{\Delta_1<n\leq\Delta_2} e\big(k n^c\tan^\theta(\log n)\big)\Bigg|
+ \sum\limits_{ k> H}\frac{H}{k^2}\Bigg|\sum\limits_{\Delta_1<n\leq\Delta_2} e\big(k n^c\tan^\theta(\log n)\big)\Bigg| \nonumber\\
&\ll \frac{X\log 2H}{H}+ \sum\limits_{1\leq k\leq H}\frac{1}{k}\Big(k^{\frac{1}{2}}X^{\frac{c}{2}}+k^{-1}X^{1-c}\Big)
+ \sum\limits_{ k> H}\frac{H}{k^2}\Big(k^{\frac{1}{2}}X^{\frac{c}{2}}+k^{-1}X^{1-c}\Big) \nonumber\\
&\ll X^\varepsilon\left(H^{-1}X+H^{\frac{1}{2}}X^{\frac{c}{2}}+ X^{1-c}\right)\ll  X^{\frac{11+3c}{15}+\varepsilon}\,.
\end{align} Summarizing  \eqref{Lambdalog2}, \eqref{Sastformula},  \eqref{Sast0est} and \eqref{suminest1} we establish the statement in the lemma.
\end{proof}

\begin{lemma}\label{Aest} Let $0<\alpha<1$. Set
\begin{equation}\label{Aalpha}
A(\alpha)=\sum\limits_{\Delta_1<n\leq \Delta_2} e\big(\alpha [n^c\tan^\theta(\log n)]\big)\,.     
\end{equation}
Then
\begin{equation*}
A(\alpha)\ll X^{\frac{1+c}{3}+\varepsilon}+\frac{X^{1-c}}{\alpha}\,.                                                                                        \end{equation*}
\end{lemma}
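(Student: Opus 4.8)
The plan is to peel off the floor function exactly as in the proof of Lemma~\ref{Salphaest}, and then to estimate the resulting complete exponential sums directly by van der Corput's method; no Heath--Brown decomposition is needed here because there is neither a von Mangoldt weight nor a restriction to primes. First I would use $[n^c\tan^\theta(\log n)]=n^c\tan^\theta(\log n)-\{n^c\tan^\theta(\log n)\}$ to write, from \eqref{Aalpha},
\[
A(\alpha)=\sum_{\Delta_1<n\le\Delta_2}e\big(\alpha n^c\tan^\theta(\log n)\big)\,e\big(-\alpha\{n^c\tan^\theta(\log n)\}\big).
\]
Applying Lemma~\ref{Buriev} with $x=\alpha$, $y=n^c\tan^\theta(\log n)$ and a parameter $H$ to be fixed later, and recalling that $c_h(\alpha)=(1-e(-\alpha))/(2\pi i(h+\alpha))$ satisfies $|c_h(\alpha)|\le1$ for every $h$ and $|c_h(\alpha)|\ll|h|^{-1}$ for $h\ne0$, this splits $A(\alpha)$ into the main sum
\[
\sum_{|h|\le H}c_h(\alpha)\sum_{\Delta_1<n\le\Delta_2}e\big((h+\alpha)n^c\tan^\theta(\log n)\big)
\]
plus an error of order $\sum_{\Delta_1<n\le\Delta_2}\min\big(1,(H\|n^c\tan^\theta(\log n)\|)^{-1}\big)$.

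For the main sum I would estimate each inner exponential sum by Lemma~\ref{GrahamandKolesnik} with $k=0$ (so $Q=1$). By \eqref{firstderivativetest} and \eqref{secondderivativetest} applied with the coefficient $k=h+\alpha$, the phase $(h+\alpha)y^c\tan^\theta(\log y)$ fulfils the hypotheses of that lemma on an interval of length $\asymp X$ with $F\asymp|h+\alpha|X^{c}$, whence
\[
\sum_{\Delta_1<n\le\Delta_2}e\big((h+\alpha)n^c\tan^\theta(\log n)\big)\ll|h+\alpha|^{1/2}X^{c/2}+|h+\alpha|^{-1}X^{1-c}.
\]
Summing against $c_h(\alpha)$, the term $h=0$ contributes (using $|c_0(\alpha)|\le1$) the singular quantity $\alpha^{1/2}X^{c/2}+\alpha^{-1}X^{1-c}$, while the terms $h\ne0$, for which $|h+\alpha|\asymp|h|$ and $|c_h(\alpha)|\ll|h|^{-1}$, contribute
\[
\sum_{1\le|h|\le H}|h|^{-1}\big(|h|^{1/2}X^{c/2}+|h|^{-1}X^{1-c}\big)\ll H^{1/2}X^{c/2}+X^{1-c}.
\]

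Finally, for the error term I would invoke the computation already performed in \eqref{suminest1}, which before specialising $H$ yields $\ll X^{\varepsilon}\big(H^{-1}X+H^{1/2}X^{c/2}+X^{1-c}\big)$. Collecting the three contributions and using $\alpha<1$ to absorb $\alpha^{1/2}X^{c/2}\ll X^{c/2}\ll H^{1/2}X^{c/2}$ and $X^{1-c}\ll\alpha^{-1}X^{1-c}$, I obtain
\[
A(\alpha)\ll X^{\varepsilon}\big(\alpha^{-1}X^{1-c}+H^{1/2}X^{c/2}+H^{-1}X\big).
\]
It remains to balance the two $H$-dependent terms: solving $H^{1/2}X^{c/2}=H^{-1}X$ gives $H=X^{(2-c)/3}$, for which both equal $X^{(1+c)/3}$, and this produces the asserted bound $A(\alpha)\ll X^{(1+c)/3+\varepsilon}+X^{1-c}/\alpha$. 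I expect the decisive points to be organisational rather than analytic: isolating the term $h=0$, which alone carries the genuinely singular factor $X^{1-c}/\alpha$ (arising from the ``$F^{-1}X$'' part of Lemma~\ref{GrahamandKolesnik}), and selecting the parameter $H=X^{(2-c)/3}$, which is larger than the value $X^{(4-3c)/15}$ used for the weighted sums precisely because the present sum is complete and requires no arithmetic decomposition.
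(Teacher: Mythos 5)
Your proposal is correct and follows essentially the same route as the paper: expansion of the floor function via Lemma~\ref{Buriev}, estimation of the inner sums by Lemma~\ref{GrahamandKolesnik} with $k=0$ using \eqref{firstderivativetest}--\eqref{secondderivativetest}, isolation of the singular $h=0$ term, and reuse of the \eqref{suminest1} computation for the error; your optimised parameter $H=X^{(2-c)/3}$ is exactly the paper's choice $H_0=X^{\frac{2-c}{3}}$, merely derived by balancing at the end rather than fixed in advance.
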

\begin{proof}
By \eqref{Aalpha} and Lemma \ref{Buriev} with $x=\alpha$, $y=n^c$, $H_0= X^{\frac{2-c}{3}}$ we find
\begin{align}\label{Aalphaformula}                                                                                                                                            
A(\alpha)&=\sum\limits_{\Delta_1<n\leq\Delta_2}e\big(\alpha n^c\tan^\theta(\log n)-\alpha\{n^c\tan^\theta(\log n)\}\big) \nonumber\\
&=\sum\limits_{\Delta_1<n\leq\Delta_2}e\big(\alpha n^c\tan^\theta(\log n)\big)e\big(-\alpha \{n^c\tan^\theta(\log n)\}\big)\nonumber\\
&=\sum\limits_{\Delta_1<n\leq\Delta_2}e\big(\alpha n^c\tan^\theta(\log n) \nonumber\\                                                                                                                                                                                                                                                                                               
&\times\left(\sum\limits_{|h|\leq H_0}c_h(\alpha)
e\big(\alpha n^c\tan^\theta(\log n) +\mathcal{O}\Bigg(\min\left(1, \frac{1}{H_0\|n^c\tan^\theta(\log n)\|}\right)\Bigg)\right)\nonumber\\
&=\sum\limits_{|h|\leq H_0}c_h(\alpha)\sum\limits_{\Delta_1<n\leq\Delta_2}e\big((h+\alpha)n^c\tan^\theta(\log n)\big)\nonumber\\
&+\mathcal{O}\Bigg(\sum\limits_{\Delta_1<n\leq\Delta_2}\min\left(1, \frac{1}{H_0\|n^c\tan^\theta(\log n)\|}\right)\Bigg)\nonumber\\
&=A_0(\alpha)+\mathcal{O}\left(\sum\limits_{\Delta_1<n\leq\Delta_2}\min\left(1, \frac{1}{H_0\|n^c\tan^\theta(\log n)\|}\right)\right)\,,
\end{align}
where
\begin{equation}\label{Aalpha0}
A_0(\alpha)=\sum\limits_{|h|\leq H_0}c_h(\alpha)\sum\limits_{\Delta_1<n\leq\Delta_2}e\big((h+\alpha)n^c\tan^\theta(\log n)\big)\,.
\end{equation}
From \eqref{Delta1}, \eqref{Delta2}, \eqref{Delta1Delta2inequalities}, \eqref{firstderivativetest}, \eqref{secondderivativetest}, \eqref{Aalpha0} and Lemma \ref{GrahamandKolesnik} with  $k=0$  we deduce
\begin{align}\label{Aalpha0est}
A_0(\alpha)&= c_0(\alpha)\sum\limits_{\Delta_1<n\leq\Delta_2}e\big(\alpha n^c\tan^\theta(\log n)\big)
+\sum\limits_{1\leq|h|\leq H_0}c_h(\alpha)\sum\limits_{\Delta_1<n\leq\Delta_2}e\big((h+\alpha)n^c\tan^\theta(\log n)\big) \nonumber\\
&\ll X^{\frac{c}{2}}+\alpha^{-1} X^{1-c} +\sum\limits_{1\leq|h|\leq H_0}\frac{1}{h}\Big((h+\alpha)^{\frac{1}{2}}
X^{\frac{c}{2}}+(h+\alpha)^{-1}X^{1-c}\Big)\nonumber\\
&\ll X^{\frac{c}{2}}+\alpha^{-1} X^{1-c} +H_0^{\frac{1}{2}}X^{\frac{c}{2}}+ X^{1-c}\ll X^{\frac{1+c}{3}}+\alpha^{-1} X^{1-c}\,.
\end{align}
On the other hand  \eqref{suminest1} gives us
\begin{equation}\label{suminest2}
\sum\limits_{\Delta_1<n\leq\Delta_2}\min\left(1, \frac{1}{H_0\|n^c\tan^\theta(\log n)\|}\right)
\ll X^\varepsilon\left(H_0^{-1}X+H_0^{\frac{1}{2}}X^{\frac{c}{2}}\right)\ll  X^{\frac{1+c}{3}+\varepsilon}\,.
\end{equation}
Now the  lemma follows from   \eqref{Aalphaformula}, \eqref{Aalpha0est} and \eqref{suminest2}  \end{proof}
We are now in a good position to estimate $\Gamma_2$ defined by  \eqref{Gamma2}.
We use Cai's \cite{Cai} argument.
From \eqref{Delta1}, \eqref{Delta2} and \eqref{Gamma2} we write
\begin{align}\label{Gamma2est1}
|\Gamma_2|&=\Bigg|\sum\limits_{\Delta_1<p\leq \Delta_2}(\log p)\int\limits_{\tau}^{1-\tau}
S^2(\alpha)e\big(\alpha [p^c\tan^\theta(\log p)]-N\alpha\big)\,d\alpha\Bigg|\nonumber\\
&\leq\sum\limits_{\Delta_1<p\leq \Delta_2}(\log p)\Bigg|\int\limits_{\tau}^{1-\tau}
S^2(\alpha)e\big(\alpha [p^c\tan^\theta(\log p)]-N\alpha\big)\,d\alpha\Bigg|\nonumber\\
&\leq(\log X)\sum\limits_{\Delta_1<n\leq \Delta_2}\Bigg|\int\limits_{\tau}^{1-\tau}
S^2(\alpha)e\big(\alpha [n^c\tan^\theta(\log n)]-N\alpha\big)\,d\alpha\Bigg|\,.
\end{align}
By \eqref{Delta1}, \eqref{Delta2}, \eqref{Gamma2est1} and Cauchy's inequality we deduce
\begin{align}\label{Gamma2est2}
|\Gamma_2|^2
&\leq X(\log X)^2\sum\limits_{\Delta_1<n\leq \Delta_2}\Bigg|\int\limits_{\tau}^{1-\tau}
S^2(\alpha)e\big(\alpha [n^c\tan^\theta(\log n)]-N\alpha\big)\,d\alpha\Bigg|^2\nonumber\\
=& X(\log X)^2\sum\limits_{\Delta_1<n\leq \Delta_2}\int\limits_{\tau}^{1-\tau}S^2(\alpha)
e\big(\alpha [n^c\tan^\theta(\log n)]-N\alpha\big)\,d\alpha \nonumber\\
&\times\int\limits_{\tau}^{1-\tau}\overline{S^2(\beta)e\big(\beta [n^c\tan^\theta(\log n)]-N\beta\big)}\,d\beta  \nonumber\\
&=X(\log X)^2\int\limits_{\tau}^{1-\tau}\overline{S^2(\beta)e(-N\beta)}\,d\beta
\int\limits_{\tau}^{1-\tau}   S^2(\alpha)A(\alpha-\beta)e(-N\alpha)\,d\alpha\nonumber\\
&\leq X(\log X)^2\int\limits_{\tau}^{1-\tau}   |S(\beta)|^2\,d\beta
\int\limits_{\tau}^{1-\tau} |S(\alpha)|^2|A(\alpha-\beta)|\,d\alpha\,.
\end{align}
Using  Lemma \ref{intStautau}, Lemma \ref{Salphaest} and Lemma \ref{Aest} we obtain
\begin{align*}
&\int\limits_{\tau}^{1-\tau} |S(\alpha)|^2|A(\alpha-\beta)|\,d\alpha\nonumber\\
&\ll\int\limits_{\tau\leq\alpha\leq1-\tau\atop{|\alpha-\beta|\leq X^{-c}}}|S(\alpha)|^2|A(\alpha-\beta)|\,d\alpha
+\int\limits_{\tau\leq\alpha\leq1-\tau\atop{|\alpha-\beta|> X^{-c}}}|S(\alpha)|^2|A(\alpha-\beta)|\,d\alpha  \nonumber\\
&\ll  X\int\limits_{\tau\leq\alpha\leq1-\tau\atop{|\alpha-\beta|\leq X^{-c}}}|S(\alpha)|^2\,d\alpha
+\int\limits_{\tau\leq\alpha\leq1-\tau\atop{|\alpha-\beta|> X^{-c}}}|S(\alpha)|^2
\Bigg(X^{\frac{1+c}{3}+\varepsilon}+\frac{X^{1-c}}{|\alpha-\beta|}\Bigg)\,d\alpha\nonumber\\
\end{align*}

\begin{align}\label{IntSalphaAt}
&\ll  X\max\limits_{\tau\leq\alpha\leq 1-\tau}|S(\alpha)|^2\int\limits_{|\alpha-\beta|\leq X^{-c}}\,d\alpha
+X^{\frac{1+c}{3}+\varepsilon}\int\limits_{\tau}^{1-\tau} |S(\alpha)|^2\,d\alpha\nonumber\\
&+ X^{1-c}\max\limits_{\tau\leq\alpha\leq 1-\tau}|S(\alpha)|^2\int\limits_{X^{-c}<|\alpha-\beta|\leq2-2\tau}\frac{1}{|\alpha-\beta|}\,d\alpha\nonumber\\
&\ll  X^{1-c+\varepsilon}\max\limits_{\tau\leq\alpha\leq 1-\tau}|S(\alpha)|^2
+X^{\frac{4+c}{3}+\varepsilon}\nonumber\\
&\ll  X^{\frac{37-9c}{15}+\varepsilon}\,.
\end{align}
Bearing in mind \eqref{Gamma2est2} and \eqref{IntSalphaAt} and  Lemma \ref{intStautau}  we  get \begin{equation}\label{Gamma2est}
\Gamma_2\ll X^{3-c-\varepsilon}\,.
\end{equation}

\subsection{The end of the proof}
Summarizing \eqref{GammaGamma12}, \eqref{Gamma1asymptotic} and \eqref{Gamma2est}
we establish the asymptotic formula \eqref{Asymptoticformula}.

The Theorem is proved.

\vskip18pt
\footnotesize
\begin{flushleft}
S. I. Dimitrov\\
Faculty of Applied Mathematics and Informatics\\
Technical University of Sofia \\
8, St.Kliment Ohridski Blvd. \\
1756 Sofia, BULGARIA\\
e-mail: sdimitrov@tu-sofia.bg\\
\end{flushleft}

\end{document}